\newcommand{\nc}{\newcommand}
\nc{\fg}{\mathfrak{f} } \nc{\vg}{\mathfrak{v} } \nc{\wg}{\mathfrak{w} }
\nc{\zg}{\mathfrak{z} } \nc{\ngo}{\mathfrak{n} } \nc{\kg}{\mathfrak{k} }
\nc{\mg}{\mathfrak{m} } \nc{\lgo}{\mathfrak{l} } \nc{\ggo}{\mathfrak{g} }
\nc{\ggob}{\overline{\mathfrak{g}} } \nc{\sog}{\mathfrak{so} }
\nc{\sug}{\mathfrak{su} } \nc{\spg}{\mathfrak{sp} } \nc{\slg}{\mathfrak{sl} }
\nc{\glg}{\mathfrak{gl} } \nc{\cg}{\mathfrak{c} } \nc{\rg}{\mathfrak{r} }
\nc{\hg}{\mathfrak{h} } \nc{\tg}{\mathfrak{t} } \nc{\ug}{\mathfrak{u} }
\nc{\dg}{\mathfrak{d} } \nc{\ag}{\mathfrak{a} } \nc{\pg}{\mathfrak{p} }
\nc{\sg}{\mathfrak{s} } \nc{\bg}{\mathfrak{b} }
\nc{\pca}{\mathcal{P}} \nc{\nca}{\mathcal{N}} \nc{\lca}{\mathcal{L}}
\nc{\oca}{\mathcal{O}} \nc{\mca}{\mathcal{M}} \nc{\tca}{\mathcal{T}}
\nc{\aca}{\mathcal{A}} \nc{\cca}{\mathcal{C}} \nc{\gca}{\mathcal{G}}
\nc{\sca}{\mathcal{S}} \nc{\hca}{\mathcal{H}} \nc{\bca}{\mathcal{B}}
\nc{\dca}{\mathcal{D}} \nc{\val}{\operatorname{val}}
\nc{\vp}{\varphi} \nc{\ddt}{\frac{d}{dt}} \nc{\dds}{\frac{d}{ds}}
\nc{\dpar}{\frac{\partial}{\partial t}} \nc{\im}{\mathtt{i}}
\nc{\SO}{\mathrm{SO}} \nc{\Sp}{\mathrm{Sp}} \nc{\Sl}{\mathrm{SL}}
\nc{\SU}{\mathrm{SU}} \nc{\Or}{\mathrm{O}} \nc{\U}{\mathrm{U}} \nc{\Gl}{\mathrm{GL}}
\nc{\Se}{\mathrm{S}} \nc{\Cl}{\mathrm{Cl}} \nc{\Spein}{\mathrm{Spin}}
\nc{\Pin}{\mathrm{Pin}} \nc{\G}{\mathrm{GL}_n(\RR)} \nc{\g}{\mathfrak{gl}_n(\RR)}
\nc{\Span}{\mathrm{Span}}
\nc{\RR}{\mathbb{R}}
\nc{\CC}{ \mathbb{C}}
\nc{\ZZ}{{\mathbb Z}}
\nc{\NN}{ \mathbb{N}}
\nc{\vs}{\vspace{.2cm}} \nc{\vsp}{\vspace{1cm}} \nc{\ip}{\langle\cdot,\cdot\rangle}
\nc{\unc}{\tfrac{1}{4}} \nc{\und}{\tfrac{1}{16}} \nc{\no}{\vs\noindent}
\nc{\ben}{\begin{enumerate}} \nc{\een}{\end{enumerate}} \nc{\f}{\frac}
\nc{\wt}{\widetilde} \nc{\mm}{M}
\nc{\Hm}{{H}}
\nc{\ad}{\operatorname{ad}}
\nc{\Ad}{\operatorname{Ad}}
 \nc{\End}{\operatorname{End}}
\nc{\Id}{\operatorname{Id}}
\nc{\Der}{\operatorname{Der}} \nc{\Ker}{\operatorname{Ker}}
\nc{\Iso}{\operatorname{I}} \nc{\Diff}{\operatorname{Diff}}
\nc{\tr}{\operatorname{tr}}
\nc{\sen}{\operatorname{sen}}
 \nc{\Cric}{\operatorname{P}} \nc{\Ricci}{\operatorname{Ric}}
\nc{\Sym}{\operatorname{Sym}}
 \nc{\ricci}{\operatorname{Rc}}
 \nc{\Rin}{\operatorname{M}}
 \nc{\Diag}{\operatorname{Diag}}
\nc{\Spec}{\operatorname{Spec}}
\newcommand{\mi}{\mathbf{i}}
\newcommand{\innerp}[1]{\langle\!\langle #1 \rangle\!\rangle}
\newcommand{\op}{\operatorname}
\numberwithin{equation}{section}
\theoremstyle{plain}
\newtheorem{theorem}{Theorem}[section]
\newtheorem{proposition}[theorem]{Proposition}
\newtheorem{lemma}[theorem]{Lemma}
\theoremstyle{definition}
\theoremstyle{remark}
\newtheorem{remark}[theorem]{Remark}
\title[Negative Ricci curvature]{Non-solvable Lie groups with negative Ricci curvature}
\author{Emilio~A.~Lauret}
\address{Instituto de Matem\'atica (INMABB), Departamento de Matem\'atica, Universidad Nacional del Sur (UNS)-CONICET, Bah\'ia Blanca, Argentina.}
\email{emilio.lauret@uns.edu.ar}
\author{Cynthia E.~Will}
\address{Universidad Nacional de C\'ordoba, FaMAF and CIEM, 5000 C\'ordoba, Argentina.}
\email{cwill@famaf.unc.edu.ar}
\subjclass[2010]{Primary 53C30, Secondary 53C21, 17B22.}
\keywords{Negative Ricci curvature, Lie groups, Riemannian metrics}
\thanks{This research was partially supported by Grants from CONICET, FONCYT and SeCyT (Universidad Nacional de Córdoba).
The first named author was also supported by the Alexander von Humboldt Foundation (return fellowship).}
\date{May 29, 2019}
\begin{document}

\begin{abstract}
Until a couple of years ago, the only known examples of Lie groups admitting left-invariant metrics with negative Ricci curvature were either solvable or semisimple.
We use a general construction from a previous article of the second named author to produce a large amount of examples with compact Levy factor.
Given a compact semisimple real Lie algebra $\mathfrak u$ and a real representation $\pi$ satisfying some technical properties, the construction returns
a metric Lie algebra $\mathfrak l(\mathfrak u,\pi)$ with negative Ricci operator.
In this paper, when $\mathfrak u$ is assumed to be simple, we prove that $\mathfrak l(\mathfrak u,\pi)$ admits a metric having negative Ricci curvature for all but finitely many finite-dimensional irreducible representations of $\mathfrak u\otimes_{\mathbb R}\mathbb C$, regarded as a real representation of $\mathfrak u$.
We also prove in the last section a more general result where the nilradical is not abelian, as it is in every $\mathfrak l(\mathfrak u,\pi)$.
\end{abstract}

\maketitle

\section{Introduction}\label{intro}

A natural question that has inspired a great deal of research is what can be said  about a differentiable manifold $M$ that admits a metric with curvature of a certain sign.
In the homogeneous case, while the scalar and sectional curvature behavior are settled, the question of when a homogeneous manifold admits a left-invariant metric with
negative Ricci curvature
seems far from being understood (see e.g.\ the introductions of \cite{LD} or \cite{NN}).

In this paper we are interested in the case when $M$ is a Lie group. We note that in this case one can study the problem at the Lie algebra level.
There are three kinds of examples in the literature of Lie groups admitting  metrics with negative Ricci curvature.

Dotti, Leite and Miatello proved in \cite{DLM}
that the only unimodular Lie groups that can admit a left-invariant metric with $\Ricci<0$ are the non-compact semisimple ones.
Furthermore, they where able to show that most of the non-compact simple Lie groups indeed have one with the exception of the following groups:
\begin{equation}\label{excep:groups}
   \begin{array} {c} \Sl(2,\CC),\quad \Sl(3,\CC), \quad \SO(5,\CC),\quad \SO(7,\CC),\quad
\Sp(3,\CC),\quad \Sp(4,\CC),\\ \\ \Sp(5,\CC),\quad
(G_2)_{\CC},\quad \Sl(2,\RR),\quad \Sp(2,\RR),\quad \Sp(3,\RR), \quad G_2
\end{array}
\end{equation}
(cf.\ remark after \cite[Thm.~2.1]{DLM}).
It is known that $\Sl(2,\RR)$ does not admit a Ricci negative metric (see \cite{M}) but the existence of such a metric on the other groups listed above is still open.
It is worth mentioning that Jablonski and Petersen recently proved in \cite{JP}
that such semisimple Lie group cannot have any compact factor.

The second sort of examples are solvable Lie groups.
This is the most developed case (see \cite{D}, \cite{NN}, \cite{N}, \cite{LD}) probably due to the relationship with Einstein solvmanifolds.
Recall that any non-flat Einstein solvmanifold is an example of a Lie group with a metric of negative Ricci curvature.

Let $\sg$ be a solvable Lie algebra admitting an inner product with negative Ricci operator.
Besides the fact that $\sg$ cannot be unimodular (see \cite{D}), Nikolayevsky and Nikonorov~\cite{NN} gave the only necessary condition known so far, namely,
there exists $Y\in\sg$ such that $\tr{\ad{Y}}>0$ and every eigenvalue of $\ad{Y}_{|_{\zg(\ngo)}}$ have positive real part, where $\ngo$ is the nilradical of $\sg$ and $\zg(\ngo)$ is the center of $\ngo$.

The second named author constructed in \cite{u2} the first examples of Lie groups, which are not solvable nor semisimple, admitting a left-invariant metric with negative Ricci curvature.
This construction was extended to a more general setting in \cite{CRN} that we next describe.

Let $\ug$ be a compact semisimple Lie algebra.
Given $(\pi, V)$ a  finite-dimensional real representation of $\mathfrak u$, let $\mathfrak l(\mathfrak u,\pi)$ be the Lie algebra defined by
\begin{equation}\label{l}
\mathfrak l(\mathfrak u,\pi):= (\RR Z \oplus \ug) \ltimes V
\quad\text{determined by } \quad
{\ad Z}_{|_\ug}=0 \text{ and }
{\ad Z}_{|_V}=\Id.
\end{equation}
It was proven in \cite[Thm.~3.3]{CRN} that, if $(\pi,V)$ satisfies some technical conditions, the Lie algebra $\mathfrak l(\mathfrak u,\pi)$ admits an inner product with negative definite Ricci operator.
See Theorem~\ref{theTheo} for the precise statement.

Furthermore, the article \cite{CRN} provides explicit examples of the above construction by using representations of the classical compact real Lie algebras ($\mathfrak{su}(n)$,  $\mathfrak{so}(n)$ and $\mathfrak{sp}(n)$ for $n\geq2$) realized as a vector space of homogeneous polynomials.

The main goal of this article is to find systematic ways to construct metric Lie algebras with negative Ricci operator from Theorem~\ref{theTheo}.
From now on we will consider complex representations of the complexified Lie algebra $\mathfrak u_\CC:=\mathfrak u\otimes_\RR\CC$, viewed as real representations of $\mathfrak u$ by restriction and by restricting scalars.
More precisely, a  finite-dimensional complex representation $\pi:\mathfrak u_\CC\to \mathfrak{gl}(V)$ induces the real representation $\pi_{|_{\mathfrak u}}:\mathfrak u\to \mathfrak{gl}(V)$, where $V$ is regarded as a real vector space. This representation will still be denoted by $(\pi,V)$.

We develop three essentially different approaches to decide whether such a representation satisfies the hypotheses in Theorem~\ref{theTheo}.
The corresponding representations are given in Theorems~\ref{thm:Weylchamber}, \ref{thm:S=pesosextremales}, and \ref{thm:zeroweight}.
The first approach is called \emph{the Weyl chamber approach} because it assumes the existence of a weight of the representation inside a Weyl chamber.
This allows us to prove the  main result of the article.

\begin{theorem}\label{thm:main-simple}
Let $\mathfrak u $ be a compact real Lie algebra such that its complexified Lie algebra $\mathfrak u_\CC$ is simple.
For all but finitely many finite-dimensional irreducible complex representations $(\pi,V)$ of $\mathfrak u_\CC$, the real Lie algebra $\mathfrak l(\mathfrak u,\pi)$ given by \eqref{l}  admits a metric with negative Ricci curvature.
\end{theorem}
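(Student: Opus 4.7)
The plan is to invoke Theorem~\ref{thm:Weylchamber} (the Weyl chamber approach). Once we have shown that an irreducible representation $V(\lambda)$ of $\mathfrak u_\CC$ contains a weight strictly inside an open Weyl chamber, Theorem~\ref{thm:Weylchamber} produces the desired Ricci-negative metric on $\mathfrak l(\mathfrak u,\pi)$. Thus the task reduces to the purely combinatorial statement: for $\mathfrak u_\CC$ simple, all but finitely many dominant highest weights $\lambda$ have the property that $V(\lambda)$ contains a \emph{strictly dominant} weight (equivalently a regular one, since the weight set is $W$-invariant). We organize the argument by cosets of the finite quotient $P/Q$, where $P$ is the weight lattice and $Q$ is the root lattice of $\mathfrak u_\CC$.

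Writing $\lambda=\sum_i n_i\omega_i$ in the basis of fundamental weights, if $n_i\geq 1$ for every $i$ then $\lambda$ itself is strictly dominant, hence regular, and is automatically a weight of $V(\lambda)$; so the substance of the argument lies in the case where $\lambda$ lies on some wall. Fix a coset $c\in P/Q$. Since $2\rho\in Q$ and $2\rho$ is strictly dominant, replacing any $\mu\in c$ by $\mu+n(2\rho)\in c$ for $n$ large yields a strictly dominant representative $\mu_c\in c\cap P^{++}$, i.e., each coset admits a strictly dominant element. By the standard description of weight occurrences in a highest weight module, every dominant weight $\leq\lambda$ (dominance order) lying in the same coset of $P/Q$ as $\lambda$ is a weight of $V(\lambda)$. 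In particular, if $\lambda\in c\cap P^+$ satisfies $\lambda-\mu_c\in Q^+$, then $\mu_c$ is a strictly dominant (hence regular) weight of $V(\lambda)$, and Theorem~\ref{thm:Weylchamber} applies.

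The remaining and main obstacle is to show that the exceptional set
\[
E_c:=\{\lambda\in c\cap P^+\,:\,\lambda-\mu_c\notin Q^+\}
\]
is finite for each $c$. The key input is the classical fact that the inverse Cartan matrix $A^{-1}$ of a simple Lie algebra has strictly positive entries — equivalently, every fundamental weight is a strictly positive $\QQ$-linear combination of the simple roots. Writing $\omega_i=\sum_j (A^{-1})_{ij}\alpha_j$, the coefficient of $\alpha_j$ in $\lambda=\sum_i n_i\omega_i$ is the positive linear form $c_j(\lambda):=\sum_i n_i(A^{-1})_{ij}$. The condition $\lambda-\mu_c\in Q^+$ is equivalent to the system of $r$ inequalities $c_j(\lambda)\geq c_j(\mu_c)$ for $j=1,\ldots,r$, so membership in $E_c$ forces $c_{j_0}(\lambda)<c_{j_0}(\mu_c)$ for some $j_0$; positivity of every $(A^{-1})_{i j_0}$ then bounds $n_i\leq c_{j_0}(\mu_c)/(A^{-1})_{i j_0}$, leaving only finitely many possibilities. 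Hence each $E_c$ is finite, and the finiteness of $P/Q$ yields finitely many exceptional $\lambda$ in total. The simplicity hypothesis on $\mathfrak u_\CC$ is used precisely to guarantee the strict positivity of every entry of $A^{-1}$ (for a semisimple but non-simple algebra, $A^{-1}$ would be block-diagonal with zero off-diagonal blocks, and the finiteness of $E_c$ would fail).
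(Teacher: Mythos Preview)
Your proof is correct and, at the structural level, follows the paper: both invoke Theorem~\ref{thm:Weylchamber} and reduce to the combinatorial claim that all but finitely many irreducible $V(\lambda)$ contain a strictly dominant weight (this is Lemma~\ref{lem:allbutfinitelymany} in the paper). The difference lies entirely in how that lemma is established. The paper proceeds type by type: for each simple root system it writes the fundamental weights explicitly in terms of simple roots and checks by hand that, for $r$ large enough, $r\omega_i$ minus a suitable sum of positive roots lands in the open chamber (invoking Lemma~\ref{lem:GoodmanWallach}). You instead give a uniform argument: choose a strictly dominant representative $\mu_c$ in each coset of $P/Q$, apply the same standard fact (which is exactly Lemma~\ref{lem:GoodmanWallach} with $\nu=\lambda$) to conclude that $\mu_c$ is a weight of $V(\lambda)$ whenever $\lambda-\mu_c\in Q^+$, and then bound the exceptional set using the strict positivity of the entries of $A^{-1}$. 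Your route is shorter and conceptually cleaner, and it isolates precisely where simplicity of $\mathfrak u_\CC$ enters; the paper's case-by-case route is longer but more explicit, and would more readily yield concrete bounds on the finite exceptional set in any given type. The paper itself remarks, just before its proof of Lemma~\ref{lem:allbutfinitelymany}, that uniform proofs exist, pointing to the MathOverflow discussion~\cite{Mathoverflow}; your argument is in that spirit.
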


 Furthermore, for $\mathfrak u$ an arbitrary compact real semisimple Lie algebra, Theorem~\ref{thm:main-semisimple} ensures that $\mathfrak l(\mathfrak u,\pi)$ admits an inner product with negative Ricci curvature for infinitely many finite-dimensional irreducible complex representations $(\pi,V)$ of $\mathfrak u_\CC$.

Although the Weyl chamber approach  (Theorem~\ref{thm:Weylchamber}) provides a great amount of examples, it does not work very well for \emph{small} representations.
We then show two more constructions, \emph{the Weyl group orbit approach} and \emph{the zero weight approach}.
The first one was inspired by the constructions in \cite{CRN}, where it was used implicitly.

In Section~\ref{sec:examples} we show many explicit examples.
In particular, when $\mathfrak u_\CC$ has rank at most two, we classify all irreducible representations $\pi$ of $\mathfrak u_\CC$ where Theorem \ref{theTheo} can be applied.
We obtain that the above holds for every such $\pi$ with the only exceptions of the $7$-dimensional irreducible representation of the complex simple Lie algebra of type $\textup{G}_2$, and the fundamental representations when the type is $\textup{A}_2$ or $\textup{B}_2=\textup{C}_2$.

With all these examples in mind, it is clear that the set of Lie groups admitting left-invariant metrics of negative Ricci curvature is much bigger than it seemed and the classification problem  has become more complicated.

We show in the last section that one can get examples with non-abelian nilradical. To be able to do that we need to show a slightly more general version of \cite[Thm.~3.3]{CRN} (see Theorem~\ref{Z:positive}).
More precisely, we consider a Lie algebra $(\RR Z \oplus \mathfrak{u}) \ltimes \mathfrak{n}$,  where $\mathfrak{u}$  is a compact semisimple Lie algebra acting on $\mathfrak{n}$ by derivations and $[Z, \mathfrak{u}]=0$.
Note that in order to get such a Lie algebra, $\ad Z$ must be a derivation of $\ngo$ and therefore it can never act as a multiple of the identity unless $\ngo$ is abelian.
%Moreover, since $[Z,\ug]=0$, $\ad Z$ should act as a multiple of the identity on each irreducible components of $\ngo$ as a representation of $\ug$.

It is known that there is no topological obstruction on a differential manifold to the existence of a complete Riemannian metric with negative Ricci curvature (see \cite{Lo}). Nevertheless, the situation changes when dealing with left-invariant metrics on Lie groups.
First, recall that if $K$ is a maximal compact subgroup of a Lie group $G$, then all the nontrivial topology of $G$ is in $K$, in the sense that as a differentiable manifold, $G$ is the product $K \times \RR^n$. Therefore, from the semisimple examples in \cite{DLM}, it follows that almost all compact simple Lie groups may appear as maximal compact subgroups of a Lie group with negative Ricci curvature with the exceptions (\ref{excep:groups}) namely,
\begin{equation}
\SO(2), \quad \SU(2), \quad \SU(3),\quad \Sp(2),\quad \Sp(3), \quad \Sp(4),\quad \Sp(5),\quad \SO(7), \quad G_2^c.
\end{equation}
Here, $G_2^c$ denotes the simply connected compact Lie group with the Lie algebra of type $\textup{G}_2$.
In \cite{u2} and \cite{CRN} were covered all the groups in the above list with the only exception of $G_2^c$, which is now attained by Theorem~\ref{thm:main-simple} (see also Proposition~\ref{prop:rank2}).
Therefore, as in the general case, there are (almost) no topological obstructions for the existence of a left-invariant metric of negative Ricci curvature on a Lie group.

The paper is organized as follows.
In Section~\ref{sec:preliminaries} we recall all the facts about compact Lie groups and their representations that will be used in the sequel.
Section~\ref{sec:approaches} contains the three approaches to ensure that $\mathfrak l(\mathfrak u,\pi)$ admits a metric of negative Ricci curvature, as well
as the proof of the main theorem.
Several explicit examples are shown in Section~\ref{sec:examples}.
In the last section, we construct Lie algebras with the non-abelian nilradical admitting an inner product of negative Ricci curvature.

\subsection*{Acknowledgments}
The authors wish to thank Jorge R.\ Lauret  and the referees for several helpful comments on a preliminary version of this paper.

\section{Preliminaries}\label{sec:preliminaries}

\textit{Throughout the paper, all Lie algebras as well as their representations are assumed finite dimensional.}

In this section we first recall well-known facts on compact real forms and representations of a complex semisimple Lie algebra via root systems.
Very good general references are \cite[\S{}2.4 and \S{}3.1--2]{GoodmanWallach-book-Springer} and \cite[Ch.\ II and V]{Kn}.

\subsection{Root system}\label{subsec:rootsystem}

Let $\mathfrak g$ be a complex semisimple Lie algebra and $B$ its Killing form.
We fix a Cartan subalgebra $\mathfrak h$ of $\mathfrak g$, and let $\Delta$ denote the corresponding system of roots and $W$ the Weyl group.
One has the root decomposition
\begin{equation}
\mathfrak g=\mathfrak h\oplus\bigoplus_{\alpha\in\Delta} \mathfrak g_\alpha,
\end{equation}
where $\mathfrak g_\alpha=\{X\in\mathfrak g: [H,X]=\alpha(H)\text{ for all }H\in\mathfrak h\}$ is one dimensional for all $\alpha\in\Delta$.

For $\alpha\in \Delta$, we denote by $H_\alpha$ the corresponding coroot, that is, the only element in $\mathfrak h$ such that $B(H,H_\alpha) = \alpha(H)$ for all $H\in\mathfrak h$.
We denote by $\mathfrak h_\RR$ the $\RR$-linear span of all $H_\alpha$ for $\alpha\in\Delta$, which is a real form of $\mathfrak h$.
We will consider the inner product on $\mathfrak h_\RR^*$ determined by $\langle \alpha,\beta\rangle :=B(H_\alpha,H_\beta)$.

For each $\alpha \in \Delta$, it is possible to choose $X_\alpha \in \mathfrak g_\alpha$ such that, for all $\alpha, \beta \in \Delta$,
\begin{equation}\label{sperootvect}
\begin{aligned}{}
[X_{\alpha} , X_{-\alpha}]
&= H_\alpha, \\
[X_{\alpha},  X_{\beta}] &= N_{\alpha,\beta} X_{\alpha+\beta}&&\text{if } \alpha+\beta \in \Delta,
\\
[X_{\alpha},  X_{\beta}] &=0 &&\text{if } \alpha+\beta \ne 0\text{ and }  \alpha+\beta \notin \Delta,
\end{aligned}
\end{equation}
where $N_{\alpha,\beta} = - N_{-\alpha,-\beta} \in \RR$ (see \cite[Thm~6.6]{Kn}).
It turns out that
\begin{equation}\label{genu}
\ug := \displaystyle{\sum_{\alpha \in \Delta}} \RR \mi H_\alpha + \sum_{\alpha \in \Delta} \RR (X_{\alpha} - X_{-\alpha}) + \sum_{\alpha \in \Delta} \RR \mi (X_{\alpha} + X_{-\alpha}) \end{equation}
is a \emph{compact real form of $\mathfrak g$}, that is, a real Lie algebra whose complexified Lie algebra is $\mathfrak g$, which has negative definite Killing form (see \cite[Thm.~6.11]{Kn}).

We denote $H^\alpha=\mi H_\alpha$, $X^\alpha = (X_{\alpha} - X_{-\alpha})$ and $Y^\alpha = \mi (X_{\alpha} + X_{-\alpha})$ for each $\alpha\in\Delta$.
For $\alpha,\beta\in\Delta$, one has that
\begin{equation}\label{lbroot}
\begin{aligned}{}
[H^\alpha, X^\beta] &= c_{\alpha,\beta}  Y^\beta, &
[X^\alpha, X^\beta] &= N_{\alpha,\beta} X^{\alpha+\beta}-  N_{-\alpha,\beta} X^{-\alpha+\beta}
	&&\text{if } \beta \ne \pm \alpha,\\
[H^\alpha, Y^\beta] &= - c_{\alpha,\beta} X^\beta,&
[Y^\alpha, Y^\beta] &= - N_{\alpha,\beta} X^{\alpha+\beta} -  N_{-\alpha,\beta} X^{-\alpha+\beta}
	&&\text{if } \beta \ne \pm \alpha,\\
[X^\alpha, Y^\alpha] &= 2 H^\alpha, &
[X^\alpha,  Y^\beta] &= N_{\alpha,\beta} Y^{\alpha+\beta}-  N_{-\alpha,\beta} Y^{-\alpha+\beta}
	&&\text{if } \beta \ne \pm \alpha,
\end{aligned}
\end{equation}
where $c_{\alpha,\beta}$ are real numbers and $N_{\alpha,\beta}=0$ if $\alpha+\beta \notin \Delta$.

\subsection{Weights of representations} \label{subsec:weights}
Let $(\pi,V)$ be a complex representation of $\mathfrak g$, that is, a $\CC$-linear map $\pi:\mathfrak g\to \mathfrak{gl}(V)$ satisfying that $\pi([X,Y])=\pi(X)\pi(Y)-\pi(Y)\pi(X)$ for all $X,Y\in\mathfrak g$.
We recall our convention that all representations are assumed finite-dimensional.

One has the weight decomposition
\begin{equation}\label{eq:weightdecomposition}
V=\bigoplus_\mu V(\mu),
\end{equation}
where $\mu\in \mathfrak h^*$ and
\begin{equation}\label{eq:V(mu)}
V(\mu)=\{v\in V: \pi(H)v=\mu(H)v\text{ for all }H\in\mathfrak h\}.
\end{equation}
Those $\mu\in \mathfrak h^*$ satisfying $V(\mu)\neq0$ are called the \emph{weights} of $\pi$ and belong to the weight lattice $P(\mathfrak g):=\{\mu\in\mathfrak h^*: \langle \mu,\alpha\rangle/\langle\alpha,\alpha\rangle \in\ZZ\text{ for all }\alpha\in\Delta\}\subset \mathfrak h_\RR^*$.
Furthermore,
\begin{equation}\label{eq:E_alphaV(mu)}
\pi(\mathfrak g_\alpha)V(\mu)\subset V(\mu+\alpha)
\end{equation}
for $\alpha\in\Delta$ and $\mu\in P(\mathfrak g)$.
The dimension of $V(\mu)$ is called the \emph{weight multiplicity} of $\mu$ in $\pi$.
One has $\dim V(w\cdot \mu)=\dim V(\mu)$ for all $w\in W$.

We now pick a positive system $\Delta^+$ of $\Delta$.
Let $\Pi$ be the set of simple roots.
An element $\mu\in\mathfrak h_\RR^*$ is called \emph{dominant} if $\langle \mu,\alpha\rangle \geq0$ for all $\alpha\in\Delta^+$.
Each connected component of the complement of the set of hyperplanes $\{\mu\in\mathfrak h_\RR^*: \langle \mu,\alpha\rangle=0\}$ for $\alpha\in\Delta$ is called a \emph{Weyl chamber}.
The cone of dominant elements coincides with the closure of the \emph{fundamental Weyl chamber} $\{\mu\in\mathfrak h_\RR^*: \langle \mu,\alpha\rangle>0\text{ for all } \alpha\in\Delta^+\}$.

Let $P^+(\mathfrak g)$ denote the set of dominant elements in the weight lattice $P(\mathfrak g)$.
We write $\Pi=\{\alpha_1,\dots,\alpha_n\}$ ($n$ is the rank of $\mathfrak g$).
The \emph{fundamental weights} $\omega_1,\dots,\omega_n$ are given by $2\langle \omega_i,\alpha_j\rangle/\langle \alpha_j,\alpha_j\rangle =\delta_{i,j}$ and satisfy $P(\mathfrak g)=\bigoplus_{i=1}^n \ZZ \omega_i$ and $P^+(\mathfrak g)=\bigoplus_{i=1}^n \ZZ_{\geq0} \omega_i$.

The next result will be useful in the sequel (see for instance \cite[Prop.~3.2.11]{GoodmanWallach-book-Springer}).

\begin{lemma}\label{lem:GoodmanWallach}
Let $\mu\in P(\mathfrak g)$ dominant  and $\nu$ a weight of $V$.
If $\nu-\mu$ can be written as a sum of positive roots, then $\mu$ is a weight of $V$.
\end{lemma}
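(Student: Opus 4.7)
The plan is to reduce to the irreducible case and then to invoke the classical fact that every dominant integral weight bounded above by the highest weight occurs as a weight of the corresponding irreducible representation. I would first decompose $V$ into irreducible summands: since both the hypothesis (that $\nu$ is a weight) and the conclusion (that $\mu$ is a weight) depend only on the nonvanishing of a weight space, and the weight space decomposition is compatible with direct sums, it suffices to treat the case where $V = V^\lambda$ is irreducible with highest weight $\lambda \in P^+(\mathfrak g)$. In this situation every weight of $V^\lambda$ lies in $\lambda - \sum_{\alpha\in\Delta^+}\ZZ_{\geq 0}\,\alpha$, so in particular $\lambda - \nu$ is a sum of positive roots, and combining with the hypothesis that $\nu - \mu$ is a sum of positive roots yields $\mu \leq \lambda$ in the usual dominance order.

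It thus remains to prove the following assertion: \emph{if $\mu\in P^+(\mathfrak g)$ satisfies $\mu \leq \lambda$, then $\mu$ is a weight of $V^\lambda$.} I would proceed by induction on the height of $\lambda - \mu$ (i.e., the sum of its coefficients when written in the basis of simple roots). The base case $\mu = \lambda$ is immediate. For the inductive step, assuming $\mu < \lambda$, the identity
\begin{equation*}
\langle \lambda, \lambda - \mu\rangle = \langle \mu, \lambda - \mu\rangle + \|\lambda - \mu\|^2 > 0,
\end{equation*}
where $\langle \mu, \lambda - \mu\rangle \geq 0$ by the dominance of $\mu$, produces a simple root $\alpha$ appearing with positive coefficient in $\lambda - \mu$ and satisfying $\langle \lambda, \alpha\rangle > 0$. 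Along this direction I would invoke the induction hypothesis to obtain a dominant weight $\mu'$ of $V^\lambda$ with $\mu' - \mu \in \ZZ_{\geq 1}\alpha$, and then apply the $\mathfrak{sl}_2$-representation theory of the triple $\{X_\alpha, H_\alpha, X_{-\alpha}\}$ on the $\alpha$-string through $\mu'$ to force $\mu$ to be a weight as well.

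The principal difficulty is this last step: arranging matters so that $\mu$ actually lies in the $\alpha$-string descending from the weight $\mu'$ provided by the induction. This requires using the dominance of $\mu$ (so that $\langle \mu, \alpha^\vee\rangle \geq 0$) together with the symmetry of $\mathfrak{sl}_2$-strings about the origin, and one must verify that the $\mu'$ chosen is in fact close enough to $\mu$ in the $\alpha$-direction for the string to reach. Since this is a classical result, the cleanest resolution when writing up the proof would be to cite \cite[Prop.~3.2.11]{GoodmanWallach-book-Springer} directly at this juncture, as the paper does.
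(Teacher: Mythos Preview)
The paper does not give a proof of this lemma at all: it is stated as a standard fact with a pointer to \cite[Prop.~3.2.11]{GoodmanWallach-book-Springer}, and is then used as a black box in the proof of Lemma~\ref{lem:allbutfinitelymany}. So there is nothing to compare your argument against beyond the citation itself, and you have already identified that citation as the intended resolution.

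Your sketch is the standard route to this classical fact, and the reduction to the irreducible case together with the inequality
\[
\langle \lambda,\lambda-\mu\rangle=\|\lambda-\mu\|^2+\langle\mu,\lambda-\mu\rangle>0
\]
is correct. The one soft spot you flag is real: the sentence ``invoke the induction hypothesis to obtain a dominant weight $\mu'$ of $V^\lambda$ with $\mu'-\mu\in\ZZ_{\ge1}\alpha$'' does not yet specify a $\mu'$, and an arbitrary dominant weight above $\mu$ need not differ from $\mu$ by a multiple of a single simple root. The usual fix is not to look for such a $\mu'$ directly but to use the saturation property of weight strings: once you know $\mu+\alpha$ (or more generally some $\nu$ on the $\alpha$-line through $\mu$ with $\nu\ge\mu$) is a weight, the $\mathfrak{sl}_2$-string through it together with $\langle\mu,\alpha^\vee\rangle\ge0$ forces $\mu$ to lie on that string. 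Making this precise is exactly the content of the cited proposition, so your proposal to cite it is both correct and in line with the paper.
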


The Highest Weight Theorem parameterizes irreducible complex representations of $\mathfrak g$ with dominant elements in $P(\mathfrak g)$.
For $\lambda\in P(\mathfrak g)$ dominant, we write $(\pi_\lambda,V_\lambda)$ the corresponding irreducible complex representation of $\mathfrak g$ with highest weight $\lambda$.

We conclude the preliminaries section by showing the existence of a weight of a representation in a Weyl chamber, for all but finitely many irreducible representations of a complex simple Lie algebra.
Although this result may be present in the mathematical literature, we include a case-by-case proof because the authors could not find any reference.
The reader will find two shorter and uniform proofs in the answers by Sam Hopkins and David E~Speyer of the MathOverflow question \cite{Mathoverflow}.

\begin{lemma}\label{lem:allbutfinitelymany}
Let $\mathfrak g$ be a complex simple Lie algebra.
For all but finitely many complex irreducible representations $(\pi,V)$ of $\mathfrak g$, we have that $(\pi,V)$ contains a weight in a Weyl chamber.
\end{lemma}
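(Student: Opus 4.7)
My approach is a case-by-case analysis on the type of the complex simple Lie algebra $\mathfrak{g}$, as the author indicates. The starting observation is that if $\lambda = \sum_{i=1}^n n_i \omega_i \in P^+(\mathfrak{g})$ has $n_i \geq 1$ for every $i$, then for any positive root $\alpha = \sum_j m_j \alpha_j$ one has $\langle \lambda, \alpha\rangle = \sum_j m_j n_j \langle \omega_j, \alpha_j\rangle > 0$, so $\lambda$ itself lies in the open fundamental Weyl chamber and is a regular weight of $V_\lambda$. The potentially ``bad'' $\lambda$ are therefore confined to the finite union $\bigcup_{i=1}^n \{\lambda \in P^+(\mathfrak{g}) : n_i = 0\}$ of proper faces of $P^+(\mathfrak{g})$.

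On each such face I would produce a regular weight of $V_\lambda$ via Lemma~\ref{lem:GoodmanWallach}: any dominant $\mu$ with $\lambda - \mu \in Q^+$ (the non-negative integer span of the simple roots) is a weight of $V_\lambda$, and if $\mu \in \rho + P^+(\mathfrak{g})$ then $\mu$ is strictly dominant, hence regular. The natural candidate is $\mu = \rho + \nu$ with $\nu \in P^+(\mathfrak{g})$. Since the inverse Cartan matrix of any simple Lie algebra has non-negative entries, each $\omega_i$ lies in the real cone $(Q^+)_\RR$, and therefore $\lambda - \rho - \nu$ belongs to $(Q^+)_\RR$ once the positive $n_i$'s are large enough. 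The residual integrality condition $\lambda - \rho - \nu \in Q$ is a congruence in the finite group $P/Q$, which is satisfied by picking $\nu$ from a fixed finite system of coset representatives.

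The case-by-case step then consists in verifying, for each simple type, an explicit bound on the nonzero $n_i$'s beyond which the above construction works. For classical types $A_n$, $B_n$, $C_n$, $D_n$ the standard combinatorial description of the weights of $V_\lambda$ (via semistandard tableaux or signed integer vectors) makes regularity into an ``all distinct'' type condition on coordinates, and the weight set of $V_\lambda$ contains such a tuple as soon as $\sum n_i$ exceeds a small explicit threshold. For exceptional types $E_6$, $E_7$, $E_8$, $F_4$, $G_2$ the faces of $P^+(\mathfrak{g})$ are few and a direct inspection of the tabulated small-dimensional representations suffices.

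The main obstacle is the lattice bookkeeping, particularly in type $A_n$ where $P/Q \cong \ZZ/(n+1)\ZZ$ is nontrivial and one must trace the regularity criterion through all $n+1$ congruence classes in order to pin down the explicit finite list of exceptional $\lambda$; for instance the analogue of the computation showing that all six weights of $V_{2\omega_1}$ for $A_2$ lie on walls while $V_{k\omega_1}$ already contains $\omega_1 + \omega_2$ as a regular weight for $k \geq 3$. A uniform proof avoiding type analysis does not seem immediate from the Lie algebra generalities collected in the preliminaries.
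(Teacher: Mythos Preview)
Your plan shares the paper's core mechanism---invoke Lemma~\ref{lem:GoodmanWallach} with the highest weight $\lambda$ to produce a strictly dominant weight of $V_\lambda$ once $\lambda$ is far enough out---but the face-by-face organization has a gap. On a face where two or more coordinates are positive, the condition ``the positive $n_i$'s are large enough'' still leaves infinitely many $\lambda$ unaccounted for (those with one positive coordinate bounded and another arbitrarily large), so as written this does not yield finiteness. The repair is to upgrade ``non-negative'' to \emph{strictly positive}: for a simple Lie algebra every entry of the inverse Cartan matrix is in fact $>0$ (connectedness of the Dynkin diagram), so a \emph{single} large coordinate $n_{i_0}$ already makes every $\alpha_j$-coefficient of $n_{i_0}\omega_{i_0}-\rho-\nu$ positive, and the remaining dominant part $\sum_{i\neq i_0}n_i\omega_i\in(Q^+)_\RR$ only helps. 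One then obtains a uniform $N$ with $\max_i n_i>N\Rightarrow V_\lambda$ has a regular weight, which is exactly the paper's reduction to the finite box $\mathcal P_r=\{\sum a_j\omega_j:0\le a_j\le r\}$.

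With that correction your argument is actually more uniform than the paper's. The paper establishes the box reduction by exhibiting, type by type, explicit positive roots $\beta_1,\dots,\beta_l$ such that $r\omega_i-\sum_k\beta_k$ is strictly dominant, with separate calculations for each of $A_n,B_n,C_n,D_n,E_6,E_7,E_8,F_4,G_2$. Your target $\mu=\rho+\nu$, with $\nu$ ranging over a fixed finite set of dominant representatives of $P/Q$, dispenses with those computations; the tableau analysis for classical types and the inspection of small exceptional representations you outline are then unnecessary for the lemma, which asks only for finiteness and not for the explicit list of exceptional $\lambda$. The paper itself remarks that uniform proofs exist, citing the MathOverflow discussion \cite{Mathoverflow}; your corrected argument is essentially of that kind.
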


\begin{proof}
By the Highest Weight Theorem, every irreducible representation of $\mathfrak g$ is in correspondence with an element in $\{\sum_{j=1}^n a_j\omega_j: a_j\in \ZZ_{\geq0}\text{ for all $j$}\}$.
Since $\mathcal P_r:=\{\sum_{j=1}^n a_j\omega_j: a_j\in\ZZ \text{ and } 0\leq a_j\leq r\text{ for all $j$}\}$ is finite for every $r\geq0$, it is sufficient to prove that, for $r$ sufficiently large, every irreducible representation of $\mathfrak g$ with highest weight $\lambda \in P^+(\mathfrak g)\smallsetminus \mathcal P_r$ has a weight in the fundamental Weyl chamber.
	
	To do that, we will use Lemma~\ref{lem:GoodmanWallach}.
	More precisely, for such $\lambda$, we will show that
	\begin{quotation}
		\it
		there are $\beta_1,\dots,\beta_l\in\Delta^+$ such that $\lambda-(\beta_1+\dots+\beta_l)$ is in the fundamental Weyl chamber.
	\end{quotation}
	
	Clearly, it is sufficient to show that there is an integer $r$ big enough satisfying that
	\begin{quotation}
		\it
		for each $1\leq i\leq n$, there are  $\beta_1,\dots,\beta_l\in\Delta^+$ such that $r\omega_i-(\beta_1+\dots+\beta_l)$ is dominant and its $\omega_j$-coefficient is positive for all $1\leq j\leq n$.
	\end{quotation}

The checking process of the above condition involves only computations in the corresponding irreducible root system.
We will do it case by case.
	
We start considering complex simple Lie algebras of exceptional type.
We will use the data available in \cite[\S{}C.2]{Kn}, where the fundamental weights $\omega_1,\dots,\omega_n$ are written in terms of the simple roots $\alpha_1,\dots,\alpha_n$.

\begin{description}
\item[Type G$_2$]
In this case, $n=2$,
$\omega_1=2\alpha_1+\alpha_2$, and $\omega_2=3\alpha_1+2\alpha_2$.
It follows immediately that
$r\omega_1-\alpha_1-\alpha_2 = (r-2)\omega_1 + \omega_2$ and $r\omega_2-\alpha_1-\alpha_2 = (r-1)\omega_2 + \omega_1$, thus the required condition holds with $r=3$.
		
\item[Type F$_4$]
In this case, $n=4$, and the fundamental weights are
$\omega_1=2\alpha_1+3\alpha_2+2\alpha_3+ \alpha_4$,
$\omega_2=3\alpha_1+6\alpha_2+4\alpha_3+2\alpha_4$,
$\omega_3=4\alpha_1+8\alpha_2+6\alpha_3+3\alpha_4$, and
$\omega_4=2\alpha_1+4\alpha_2+3\alpha_3+2\alpha_4$.
It follows that
\begin{align*}
r\omega_1-5\alpha_1-2\alpha_2-\alpha_3
&= (r-7)\omega_1+(2\omega_1-\alpha_1)+ (3\omega_1-2\alpha_1)
\\ & \quad
+(2\omega_1-2\alpha_1-2\alpha_2-\alpha_3) \\
&= (r-7)\omega_1+\omega_2+\omega_3+\omega_4,
\end{align*}%
%Similarly,
\begin{align*}
r\omega_2-4\alpha_1-9\alpha_2-5\alpha_3-2\alpha_4&= (r-4)\omega_2+\omega_1+\omega_3+\omega_4,
\\
r\omega_3-3\alpha_1-8\alpha_2-7\alpha_3-3\alpha_4
&= (r-3)\omega_3+\omega_1+\omega_2+\omega_4,
\\
r\omega_4-\alpha_1-3\alpha_2-3\alpha_3-4\alpha_4
&= (r-5)\omega_4+\omega_1+\omega_2+\omega_3.
\end{align*}
%$r\omega_2-4\alpha_1-9\alpha_2-5\alpha_3-2\alpha_4= (r-4)\omega_2+\omega_1+\omega_3+\omega_4$,
%$r\omega_3-3\alpha_1-8\alpha_2-7\alpha_3-3\alpha_4
%= (r-3)\omega_3+\omega_1+\omega_2+\omega_4$, and
%$r\omega_4-\alpha_1-3\alpha_2-3\alpha_3-4\alpha_4
%= (r-5)\omega_4+\omega_1+\omega_2+\omega_3$.
Hence, the required condition holds with $r=8$.

\item[Type E$_n$]
In these cases, $n=6,7,8$ and, for each $1\leq i\leq n$, $\omega_i= \sum_{j=1}^n \frac{a_j}{2} \alpha_j$ with $a_1,\dots,a_n$ positive integers.
Although it is pretty involved to give the explicit calculations as in the previous two cases, it is clear that for a positive integer $r$ sufficiently large, one has
\begin{equation*}
r\omega_i= (r-s)\omega_i +\sum_{j=1}^n b_j\omega_j + \mu
\end{equation*}
for some integer $s\leq r$, $b_1,\dots,b_n$ positive integers, and $\mu$ a sum of simple roots.
Hence, the condition is valid.
\end{description}

We next consider the classical Lie algebras.
The root system data can be found in \cite[\S{}C.1]{Kn}.
We give all the details for type $\textup{D}_n$ since it is the one that presents more difficulties.
The rest of the types are given in a brief way because the method is analogous.
	
\begin{description}
\item[Type D$_n$]
In this case, $\mathfrak g=\mathfrak{so}(2n,\CC)$ for any $n\geq 3$, $\mathfrak h_\RR= \Span_\RR\{\varepsilon_1,\dots,\varepsilon_n\}$,
$
\Delta^+=\{\varepsilon_i\pm\varepsilon_j : 1\leq i<j\leq n\},
$
$\omega_i = \varepsilon_1+\dots+\varepsilon_i$ for $1\leq i\leq n-2$,
$\omega_{n-1} = \tfrac12 (\varepsilon_1+ \dots+ \varepsilon_{n-1} -\varepsilon_n)$, and
$\omega_{n} = \tfrac12 (\varepsilon_1+\dots+\varepsilon_n)$.
We have that
\begin{align}
\label{Di=1}
r\omega_1-(\varepsilon_1-\varepsilon_2)
	&=(r-1)\omega_1 +\varepsilon_2 = (r-2)\omega_1+\omega_2,
\\ \label{Di=2}
r\omega_i-(\varepsilon_i-\varepsilon_{i+1})
	&= (r-1)\omega_i + (\varepsilon_1+\dots+\varepsilon_i) -(\varepsilon_i-\varepsilon_{i+1})
\\ \notag
	&= \omega_{i-1} + (r-2)\omega_i +\omega_{i+1}
	\qquad \qquad \text{for $2\leq i\leq n-3$},
\\ \label{Di=n-2}
r\omega_{n-2}-(\varepsilon_{n-2}-\varepsilon_{{n-1}})
	&= (r-1)\omega_{n-2} +(\varepsilon_1+\dots+\varepsilon_{n-3})+\varepsilon_{n-1}
\\ \notag
	&= \omega_{n-3} + (r-2)\omega_{n-2} + (\varepsilon_1+\dots+\varepsilon_{n-1})
\\ \notag
	&= \omega_{n-3} + (r-2)\omega_{n-2} +\omega_{n-1}+\omega_n,
\end{align}
Furthermore,
\begin{align}
r\omega_{n-1}-3(\varepsilon_{n-1}-\varepsilon_{n}) -(\varepsilon_{n-2}-\varepsilon_{n-1})
	&= (r-6) \omega_{n-1} +3\omega_{n-2}-(\varepsilon_{n-2}-\varepsilon_{n-1})
\\ \notag
	&=  \omega_{n-3}+\omega_{n-2}+(r-5)\omega_{n-1}+\omega_n
\\
r\omega_n-2(\varepsilon_{n-1}+\varepsilon_n) - (\varepsilon_{n-2} - \varepsilon_{n-1})
	&= (r-4)\omega_n + 2\omega_{n-2} - (\varepsilon_{n-2} - \varepsilon_{n-1})
\\ \notag
	&=   \omega_{n-3}+\omega_{n-1} +(r-3)\omega_n.
\end{align}

		The above identities tell us that, for any $1\leq i\leq n$, one can subtract to $r\omega_i$ a sum of positive roots obtaining a dominant element with positive $\omega_{i-1}$ and $\omega_{i+1}$-coefficients (it is understood that there are no $\omega_{i}$-coefficient for $i=0,n+1$).
		Proceeding in this way several times, one obtains a dominant element in $P(\mathfrak g)$ with positive $\omega_j$-coefficient for all $j$, for $r$ large enough.
		This proves the required condition.

		\item[Type B$_n$]
		$\mathfrak g=\mathfrak{so}(2n+1,\CC)$ for any $n\geq 2$, $\mathfrak h_\RR= \Span_\RR\{\varepsilon_1,\dots,\varepsilon_n\}$, $\Delta^+=\{\varepsilon_i\pm\varepsilon_j : 1\leq i<j\leq n\} \cup\{\varepsilon_i: 1\leq i\leq n\}$, $\omega_i =\varepsilon_1+\dots+\varepsilon_i$ for $1\leq i\leq n-1$, and $\omega_n=\tfrac12 (\varepsilon_1+\dots+\varepsilon_n)$.
		
		One can check that \eqref{Di=1} holds, as well as \eqref{Di=2} for $2\leq i\leq n-2$.
		Furthermore,
		$r\omega_{n-1}-(\varepsilon_{n-1}-\varepsilon_n) = \omega_{n-2}+(r-2)\omega_{n-1}+2\omega_n$, and
		$r\omega_n- \varepsilon_n= \omega_{n-1} + (r-2)\omega_n$.

		\item[Type C$_n$]
		$\mathfrak g=\mathfrak{sp}(n,\CC)$ for any $n\geq 2$, $\mathfrak h_\RR= \Span_\RR\{\varepsilon_1,\dots,\varepsilon_n\}$, $\Delta^+=\{\varepsilon_i\pm\varepsilon_j : 1\leq i<j\leq n\} \cup\{ 2\varepsilon_i: 1\leq i\leq n\}$, $\omega_i =\varepsilon_1+\dots+\varepsilon_i$ for $1\leq i\leq n$.
		
		One can check that \eqref{Di=1} holds, as well as \eqref{Di=2} for $2\leq i\leq n-1$.
		Furthermore, $r\omega_n- 2\varepsilon_n= 2\omega_{n-1} + (r-2)\omega_n$.

		\item[Type A$_n$]
		$\mathfrak g=\mathfrak{sl}(n+1,\CC)$ for any $n\geq 1$, $\mathfrak h_\RR=\{\sum_{i=1}^{n+1} a_i\varepsilon_i: a_i\in\RR \;\forall \, i
		,\; \sum_{i=1}^{n+1}a_i=0 \}$, $\Delta^+=\{\varepsilon_i-\varepsilon_j : 1\leq i<j\leq n+1\}$, $\omega_i =\varepsilon_1+\dots+\varepsilon_i-\tfrac{i}{n+1} (\varepsilon_1+\dots+\varepsilon_{n+1})$ for $1\leq i\leq n$.
		
		One can check, for any $1\leq i\leq n$, that
		\begin{align*}
			r\omega_i-(\varepsilon_i-\varepsilon_{i+1})
			&= (r-2)\omega_{i-1}+2(\varepsilon_1+\dots+\varepsilon_i) - \tfrac{2i}{n+1} (\varepsilon_1+\dots+\varepsilon_{n+1}) -\varepsilon_i+\varepsilon_{i+1} \\
			&= (r-2)\omega_{i-1}+(\varepsilon_1+\dots+\varepsilon_{i-1}) -\tfrac{i-1}{n+1} (\varepsilon_1+\dots+\varepsilon_{n+1}) \\
			&\quad + (\varepsilon_1+\dots+\varepsilon_{i+1}) - \tfrac{i+1}{n+1} (\varepsilon_1+\dots+\varepsilon_{n+1}) \\
			&= \omega_{i-1} + (r-2)\omega_i +\omega_{i+1} ,
		\end{align*}
		where $\omega_{0}=\omega_{n+1}=0$.
	\end{description}
	
	We conclude that the required condition holds for every complex simple Lie algebra, which completes the proof.
\end{proof}

%================================================================

\section{Existence of Ricci negative metric Lie algebras} \label{sec:approaches}

In this section we introduce three different approaches to use Theorem~\ref{theTheo}.
Furthermore, it also contains the proof of the main theorem.

In what follows, $\mathfrak u$ denotes a compact semisimple real Lie algebra and, since a compact real form of a semisimple complex Lie algebra is unique up to isomorphism (see \cite[Cor.~6.20]{Kn}), there is a root system $\Delta$ of the complex Lie algebra $\mathfrak u_\CC:= \mathfrak u\otimes_\RR\CC$ and elements $X_\alpha \in (\mathfrak u_\CC)_\alpha$ for each $\alpha\in\Delta$ such that the compact real form given by \eqref{genu} coincides with $\mathfrak u$.
Furthermore, we fix
$\Delta^+$ a positive system for $\Delta$ and $\Pi= \{\alpha_1,\dots,\alpha_n\}$ an ordered set of simple roots.

In the sequel, we will use the objects introduced in Section~\ref{sec:preliminaries} without further comments, for instance, the weight lattice $P(\mathfrak u_\CC)$, its dominant elements $P^+(\mathfrak u_\CC)$, weights of a complex representation, the Weyl group $W$, Weyl chambers, the fundamental weights $\omega_1,\dots,\omega_n$, the (unique up to equivalence) irreducible representation $(\pi_\lambda,V_\lambda)$ with the highest weight $\lambda\in P^+(\mathfrak u_\CC)$, etc.

\subsection{Ricci negative inner products}

We first recall the main theorem in \cite{CRN}, which will be the main tool in the sequel.

To a given real representation $(\pi,V)$ of a real Lie algebra $\mathfrak u$ we associate the real Lie algebra $\mathfrak l(\mathfrak u,\pi) := (\RR Z\oplus \mathfrak u)\ltimes V$, where $\RR Z\oplus\mathfrak u$ is a central extension of $\mathfrak u$ (i.e.\ ${\ad Z}_{|_\ug}=0$) and $Z$ acts as the identity on $V$ (i.e.\ ${\ad Z}_{|_V}=\Id$).
More precisely, the brackets in $\mathfrak l(\mathfrak u,\pi)$ are determined by
\begin{align}\label{l2}
[Z,X]&=0,&
[Z,v]&=v,&
[X,v]&= \pi(X)v,&
[v,w]&=0,
\end{align}
for all $X \in \mathfrak u$ and $v,w\in V$.

\begin{theorem}\label{theTheo} \cite[Thm.~3.3]{CRN}
Let $(V,\pi)$ be a real representation of $\mathfrak u$.
We assume the decomposition $V=V_1 \oplus V_2$ and the existence of an inner product $\langle\cdot,\cdot\rangle$ on $V$ satisfying the following properties:
\begin{enumerate}
\item \label{item:H^alpha-invariant}
$V_1$ and $V_2$ are $\mi \mathfrak h_\RR$-invariant.
		
\item \label{item:X^alphaV_1subsetV_2}
$\pi(X^\alpha)(V_1) \subset V_2$ and $\pi(Y^\alpha)(V_1) \subset V_2$ for every $\alpha \in \Delta^+$.
		
\item \label{item:orthogonal}
$V_1$ is orthogonal to $V_2$.
		
\item \label{item:skewsymmetric}
$\pi(H)$ is a skew-symmetric operator of $V$ for every $H\in \mi\mathfrak h_\RR$.
		
\item \label{item:non-trivial}
$\pi(X^\alpha)_{|_{V_1}}$ and $\pi(Y^\alpha)_{|_{V_1}}$ are not trivial for every $\alpha \in \Delta^+$.
		
\item \label{item:trace=0}
$\tr \pi(Y)_{|_{V_1}}^t \pi(X)_{|_{V_1}} = 0$ whenever $X \ne Y$ are elements of $\{X^\alpha, Y^\alpha,\, \alpha \in \Delta^+\}$.
\end{enumerate}
Then, the real Lie algebra $\mathfrak l(\mathfrak u,\pi)$ given by \eqref{l2} (i.e.\
$\mathfrak l(\mathfrak u,\pi) = (\RR Z\oplus \mathfrak u)\ltimes V$ where $\ad Z_{|_{\mathfrak u}}=0$ and $\ad Z_{|_V}=\op{Id}$) admits an inner product with negative Ricci curvature.
\end{theorem}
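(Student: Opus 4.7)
The approach is to put on $\mathfrak{l}(\mathfrak{u},\pi)$ an inner product respecting the decomposition $\RR Z\oplus \mathfrak{u}\oplus V_1\oplus V_2$, depending on a few positive parameters, and to tune them so that the Ricci operator becomes negative definite on every block. Concretely, I would make the four summands mutually orthogonal, take $-\kappa B|_{\mathfrak{u}}$ with $\kappa>0$ on $\mathfrak{u}$, declare $\|Z\|^2=\tau$, and rescale the given inner product on $V_j$ by $t_j>0$. Hypothesis~(4), together with the bracket relations~\eqref{lbroot} and the compactness of $\mathfrak{u}$, upgrades the $\mi\mathfrak{h}_\RR$-action to a skew-symmetric action of all of $\mathfrak{u}$ on $V$, so the root vectors $H^\alpha, X^\alpha, Y^\alpha$ together with a weight basis of $V$ form an orthogonal basis adapted to the problem.

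I would then plug this basis into the standard Ricci formula for a metric Lie algebra. The relations $[Z,\mathfrak{u}]=0$ and $\ad Z|_V=\Id$ immediately give $\Ricci(Z,Z)$ equal to a strictly negative multiple of $\dim V/\tau$ and kill the $Z$-cross terms, while conditions~(1)--(3) prevent Ricci from mixing the remaining blocks: (1) keeps each $V_j$ invariant under the Cartan action, (2) makes $\pi(\mathfrak{u})$ swap $V_1$ and $V_2$, and (3) together with (1) makes the $V_1$--$V_2$ off-diagonal block vanish. The Ricci operator thus becomes block-diagonal along $\RR Z\oplus\mathfrak{u}\oplus V_1\oplus V_2$.

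The heart of the argument is the $\mathfrak{u}$-block. Using the orthonormal basis above, the standard formula gives for each $\alpha\in\Delta^+$ a schematic expression
\begin{equation*}
\Ricci(X^\alpha,X^\alpha) = \frac{A_\alpha}{\kappa} - \frac{t_1}{2 t_2}\,\tr\bigl(\pi(X^\alpha)_{|_{V_1}}^t\, \pi(X^\alpha)_{|_{V_1}}\bigr) + R_\alpha,
\end{equation*}
where $A_\alpha>0$ is the positive intrinsic contribution from the compact $(\mathfrak{u},-\kappa B)$ and $R_\alpha$ collects bounded terms independent of the ratio $t_1/t_2$. Condition~(5) is exactly the statement that the trace is strictly positive, so letting $t_1/t_2\to\infty$ drives this eigenvalue negative; condition~(6) is exactly what kills the corresponding mixed traces $\tr\bigl(\pi(Y)^t\pi(X)|_{V_1}\bigr)$ between distinct root vectors, making the Ricci operator genuinely diagonal in this basis rather than merely block-diagonal. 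Analogous estimates control the $V_j$-blocks once $\tau$ is taken large, since the only remaining positive term there comes from the $\pi(\mathfrak{u})$-action and is balanced by a $-\tau^{-1}$ contribution from $\ad Z$.

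The main obstacle is reconciling the competing signs across the three nontrivial blocks: the compact $\mathfrak{u}$ has naturally positive Ricci under the Killing metric, the $\pi$-action pushes Ricci positive on $V$, and only the vector $Z$ supplies a universal negative contribution. Conditions~(5) and~(6) are precisely the content needed to convert the $\pi$-action into a strictly negative, controllable quantity in every root direction on the $\mathfrak{u}$-block without introducing uncontrolled cross terms, and conditions~(1)--(3) are what keep the overall operator block-diagonal so that the finitely many resulting inequalities in $(\kappa,\tau,t_1,t_2)$ decouple and can be solved simultaneously by taking $t_1/t_2$ and $\tau$ sufficiently large.
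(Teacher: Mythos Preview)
Your approach is genuinely different from the one the paper quotes from \cite{CRN}, and as written it has a real gap.

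The paper's proof does not build a single metric and tune parameters. Instead it \emph{degenerates} $\mathfrak l(\mathfrak u,\pi)$ to a \emph{solvable} Lie algebra $\mathfrak l_\infty$, constructs a Ricci-negative inner product on $\mathfrak l_\infty$, and then invokes continuity (openness of ``$\Ricci<0$'') to pull this back to $\mathfrak l(\mathfrak u,\pi)$ for parameters close to the limit. The point of passing to a solvable limit is precisely to kill the internal brackets of the compact factor $\mathfrak u$, which are the source of the stubborn positive Ricci contribution you are trying to beat by hand. Your four parameters $(\kappa,\tau,t_1,t_2)$ only rescale the blocks $\RR Z,\mathfrak u,V_1,V_2$ globally; no choice of them degenerates the bracket on $\mathfrak u$ itself, so you never reach the regime where the argument in \cite{CRN} actually takes place.

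Two specific claims in your sketch are not supported by the hypotheses and are what make the direct computation break down. First, hypothesis~(4) says only that the Cartan piece $\mi\mathfrak h_\RR$ acts skew-symmetrically for the \emph{given} inner product; compactness of $\mathfrak u$ guarantees the existence of \emph{some} invariant inner product, but nothing in (1)--(6) forces the given one to make $\pi(X^\alpha),\pi(Y^\alpha)$ skew-symmetric, so you cannot ``upgrade'' (4) as claimed. Second, condition~(2) is one-directional: it says $\pi(X^\alpha)V_1\subset V_2$ but imposes nothing on $\pi(X^\alpha)|_{V_2}$, so $\pi(\mathfrak u)$ need not ``swap'' $V_1$ and $V_2$. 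Both your block-diagonality assertion for $\Ricci$ and your schematic formula for $\Ricci(X^\alpha,X^\alpha)$ (which omits the $\pi(X^\alpha)|_{V_2}$ contributions entirely) rely on these two unjustified steps. Without them you lose control of the off-diagonal and $V_2$-terms, and the ``bounded $R_\alpha$'' cannot be shown to stay bounded as $t_1/t_2\to\infty$.
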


The idea of the proof is to first prove that $\lgo (\mathfrak u,\pi)$ degenerates into a solvable Lie algebra $\lgo_\infty$ to finally show that this limit admits an inner product with $\Ricci < 0$. By continuity, so does the starting Lie algebra.

\subsection{Common generalities}
Let $(\pi,V)$ be a complex representation of $\mathfrak u_\CC$.
We can regard $V$ as a real vector space by restricting scalars, and then the restriction of $\pi$ to $\mathfrak u$ becomes a real representation of $\mathfrak u$.
To simplify the notation, the resulting real representation $\pi_{|_{\mathfrak u}}:\mathfrak u\to\mathfrak{gl}(V)$ is again denoted by $(\pi,V)$.

Let $U$ be the simply connected Lie group with Lie algebra $\mathfrak u$.
We denote again by $\pi$ the associated homomorphism of groups $U\to\Gl(V)$.
Since $U$ is compact, there is a complex inner product $\innerp{\cdot,\cdot}$ on the complex vector space $V$ such that $\pi$ is unitary.
More precisely, for $v,w\in V$, one has that $\innerp{\pi(a)v,\pi(a)w}=\innerp{v,w}$ for all $a\in U$, which gives $\innerp{\pi(X)v,w}=-\innerp{v,\pi(X)w}$ for all $X\in \mathfrak u$.
We set
\begin{equation}\label{inner_product}
\langle v,w\rangle = \operatorname{Re}(\innerp{v,w})
\qquad\text{for }v,w\in V.
\end{equation}
This is clearly a real inner product on $V$.
Moreover, $\langle \pi(X)v,w\rangle = -\langle v,\pi(X)w\rangle$ for all $X\in\mathfrak u$ and $v,w\in V$, that is, $\pi(X)$ acts as a skew-symmetric operator on $V$ for all $X\in\mathfrak u$.
From now on, we will associate such real inner product on $V$ to any complex representation of $\mathfrak u_\CC$ without further comments.

It follows immediately from the definition \eqref{eq:V(mu)} that a weight space is invariant by $\mathfrak h_\RR$.
Thus, property \eqref{item:H^alpha-invariant} in Theorem~\ref{theTheo} suggests us to decompose $V=V_1\oplus V_2$, by taking $V_1$ and $V_2$ subspaces given by sums of distinct weight spaces.

The simple observations from the two last paragraphs guarantee us four properties from Theorem~\ref{theTheo}.

\begin{proposition}\label{prop:V_1(S)}
Let $(\pi,V)$ be a complex representation of $\mathfrak u_\CC$ and let $\mathcal S$ be a subset of weights of $\pi$.
We set
\begin{align*}
V_1&=\bigoplus_{\mu\in \mathcal S} V(\mu), &
V_2&=\bigoplus_{\mu\notin \mathcal S} V(\mu).
\end{align*}
Then, there exists a real inner product on $V=V_1\oplus V_2$ such that the induced real representation of $\mathfrak u$ on $V$ satisfies properties \eqref{item:H^alpha-invariant}, \eqref{item:orthogonal}, \eqref{item:skewsymmetric}, and \eqref{item:trace=0} from Theorem~\ref{theTheo}.
Moreover, if in addition $\mathcal S$ has exactly one element, then \eqref{item:X^alphaV_1subsetV_2} also holds.
\end{proposition}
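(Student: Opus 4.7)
The plan is to take the inner product $\langle\cdot,\cdot\rangle=\operatorname{Re}\innerp{\cdot,\cdot}$ from \eqref{inner_product} and observe that four of the six properties of Theorem~\ref{theTheo} follow essentially from the fact that the $U$-action is unitary and that distinct weight spaces are orthogonal; the remaining two, \eqref{item:X^alphaV_1subsetV_2} and \eqref{item:trace=0}, should reduce to a short computation based on \eqref{eq:E_alphaV(mu)}.

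First I would verify \eqref{item:H^alpha-invariant}, \eqref{item:orthogonal} and \eqref{item:skewsymmetric}. Since each $V(\mu)$ is a complex subspace, the identity $\pi(\mi H)v=\mi\mu(H)v$ for $H\in\mathfrak h_\RR$ and $v\in V(\mu)$ shows that $V(\mu)$, and hence $V_1$ and $V_2$, are $\mi\mathfrak h_\RR$-invariant, giving \eqref{item:H^alpha-invariant}. Property \eqref{item:skewsymmetric} is exactly the skew-symmetry of $\pi(X)$ for $X\in\mathfrak u$, which is built into the construction of $\langle\cdot,\cdot\rangle$. For \eqref{item:orthogonal}, I would run the standard spectral argument: if $v\in V(\mu)$, $w\in V(\nu)$ and $\mu\neq\nu$, pick $H\in\mathfrak h_\RR$ with $\mu(H)\neq\nu(H)$ and use skew-Hermiticity of $\pi(\mi H)$ with respect to $\innerp{\cdot,\cdot}$ to force $\innerp{v,w}=0$; passing to real parts yields $V_1\perp V_2$.

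The main computation is property \eqref{item:trace=0}. Choose a real orthonormal basis $\{e_i\}$ of $V_1$ with each $e_i$ in some $V(\mu_i)$, so that the trace equals $\sum_i\langle\pi(X)e_i,\pi(Y)e_i\rangle$ for distinct $X,Y\in\{X^\alpha,Y^\alpha:\alpha\in\Delta^+\}$. By \eqref{eq:E_alphaV(mu)}, both $\pi(X^\alpha)e_i$ and $\pi(Y^\alpha)e_i$ lie in $V(\mu_i+\alpha)\oplus V(\mu_i-\alpha)$. When $X,Y$ correspond to different positive roots $\alpha\neq\beta$, the weights $\mu_i\pm\alpha$ and $\mu_i\pm\beta$ are pairwise distinct (since $\alpha\neq\pm\beta$), so each summand vanishes by weight-space orthogonality. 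The remaining case is $(X,Y)=(X^\alpha,Y^\alpha)$; expanding $\pi(X^\alpha)=\pi(X_\alpha)-\pi(X_{-\alpha})$ and $\pi(Y^\alpha)=\mi(\pi(X_\alpha)+\pi(X_{-\alpha}))$, orthogonality of $V(\mu_i+\alpha)$ and $V(\mu_i-\alpha)$ kills the cross terms and leaves two diagonal contributions of the form $\langle w,\mi w\rangle=\operatorname{Re}\innerp{w,\mi w}=0$, so the summand is zero here as well.

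Finally, if $\mathcal S=\{\mu_0\}$ is a singleton, then $V_1=V(\mu_0)$, and \eqref{eq:E_alphaV(mu)} directly gives $\pi(X^\alpha)V_1,\pi(Y^\alpha)V_1\subset V(\mu_0+\alpha)\oplus V(\mu_0-\alpha)\subset V_2$ since $\alpha\neq 0$, which is \eqref{item:X^alphaV_1subsetV_2}. Among all these steps, the only one with any real content is the same-root case of \eqref{item:trace=0}, and once weight-space orthogonality is in place this is just a one-line cancellation, so I foresee no genuine obstacle.
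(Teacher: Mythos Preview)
Your proposal is correct and follows essentially the same route as the paper: the same inner product $\langle\cdot,\cdot\rangle=\operatorname{Re}\innerp{\cdot,\cdot}$, the same weight-space orthogonality argument for \eqref{item:orthogonal}, and the same use of \eqref{eq:E_alphaV(mu)} for \eqref{item:trace=0} and \eqref{item:X^alphaV_1subsetV_2}. The only cosmetic difference is in the same-root case of \eqref{item:trace=0}: the paper phrases the vanishing via the observation that $\pi(X^\alpha)$ preserves the real span of a chosen complex weight basis while $\pi(Y^\alpha)$ swaps it with the span of its $\mi$-multiples, whereas you compute directly that the surviving terms are of the form $\langle w,\mi w\rangle=\operatorname{Re}\innerp{w,\mi w}=0$; these are the same cancellation.
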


\begin{proof}
Property \eqref{item:H^alpha-invariant} follows from \eqref{eq:V(mu)}.
We already mentioned that $\pi(X):V\to V$ is skew-symmetric on $V$ with respect to $\langle\cdot,\cdot\rangle$ for all $X\in \mathfrak u$, thus property \eqref{item:skewsymmetric} holds.

Let $\mu$ and $\nu$ be distinct weights of $\pi$ and let $v\in V(\mu)$ and $w\in V(\nu)$.
For any $H\in\mathfrak h$, one has that
\begin{equation}
\mu(H) \innerp{v,w} = \innerp{\pi(H)v,w} =-\innerp{v,\pi(H)w}= -\nu(H) \innerp{v,w},
\end{equation}
which implies $\langle v,w\rangle = \op{Re}(\innerp{v,w})=0$ by taking $H\in\mathfrak h$ such that $\mu(H)\neq\nu(H)$.
This shows that different weight spaces are orthogonal with respect to $\langle\cdot,\cdot\rangle$; in particular \eqref{item:orthogonal} holds.

We next prove the validity of property \eqref{item:trace=0}.
Let $\{v_1,\dots, v_d\}$ be an orthogonal $\CC$-basis of $(V,\innerp{\cdot,\cdot})$ given by weight vectors, such that $\{v_1,\dots,v_c\}$ is a $\CC$-basis of $V_1$ ($c\leq d$).
It turns out that $\{v_1,\mi v_1,\dots, v_d,\mi v_d\}$ is an $\RR$-basis of $V$, as well as $\{v_1,\mi v_1,\dots, v_c,\mi v_c\}$ is an $\RR$-basis of $V_1$

We note for $X,Y\in\mathfrak u$ that
\begin{equation}
\begin{aligned}
\tr \pi(Y)_{|_{V_1}}^t \pi(X)_{|_{V_1}}
&= \sum_{j=1}^c \big(\langle \pi(Y)_{|_{V_1}}^t \pi(X)_{|_{V_1}} v_j,v_j\rangle + \langle \pi(Y)_{|_{V_1}}^t \pi(X)_{|_{V_1}} \mi v_j,\mi v_j\rangle\big) \\
&= \sum_{j=1}^c \big(\langle \pi(X)  v_j,\pi(Y) v_j\rangle + \langle  \pi(X)  \mi v_j, \pi(Y)  \mi v_j\rangle\big).
\end{aligned}
\end{equation}
Since $\pi(X^\alpha) V(\mu)\subset V(\mu-\alpha)\oplus V(\mu+\alpha)$, $\pi(Y^\alpha) V(\mu)\subset V(\mu-\alpha)\oplus V(\mu+\alpha)$ by \eqref{eq:E_alphaV(mu)}, and $\langle V(\mu),V(\nu)\rangle=0$ for $\mu\neq \nu$, it follows that $\tr \pi(X^\beta)_{|_{V_1}}^t \pi(X^\alpha)_{|_{V_1}}$, $\tr \pi(Y^\beta)_{|_{V_1}}^t \pi(X^\alpha)_{|_{V_1}}$, $\tr \pi(X^\beta)_{|_{V_1}}^t \pi(Y^\alpha)_{|_{V_1}}$, and $\tr \pi(Y^\beta)_{|_{V_1}}^t \pi(Y^\alpha)_{|_{V_1}}$ are all equal to zero for $\alpha\neq\beta$ in $\Delta^+$.

It remains to show that $\tr \pi(Y^\alpha)_{|_{V_1}}^t \pi(X^\alpha)_{|_{V_1}}=0$ for all $\alpha\in\Delta^+$.
This follows from the facts that $\pi(X^\alpha)$ preserves  the (orthogonal) subspaces
\begin{equation}
\Span_\RR\{v_1,\dots,v_c\}
\quad\text{and}\quad
\Span_\RR\{\mi v_1,\dots,\mi v_c\},
\end{equation}
while $\pi(Y^\alpha)$ switches them.
We conclude that \eqref{item:trace=0} holds.

We now assume that $\mathcal S$ has exactly one element, say $\mu_0$.
Then, the validity of \eqref{item:X^alphaV_1subsetV_2} follows since, for any $\alpha\in\Delta^+$, \eqref{eq:E_alphaV(mu)} yields that $\pi(X^\alpha) V_1$ and $\pi(Y^\alpha)V_1$ are included into $V(\mu_0+\alpha)\oplus V(\mu_0-\alpha)$, which is included in $V_2$ because  $\mu_0\pm\alpha\notin \mathcal S=\{\mu_0\}$.
\end{proof}

The next lemma will be very useful to check property \eqref{item:non-trivial}.

\begin{lemma}\label{lem:sl(2,C)-modulo}
Let $(\pi,V)$ be a complex representation of $\mathfrak u_\CC$, let $\mu$ be a weight of $\pi$, and let $\alpha\in\Delta$.
If the integer $\langle \alpha,\mu\rangle$ is non-zero, then $\pi(X^\alpha)_{|_{V(\mu-k\alpha)}}$ and $\pi(Y^\alpha)_{|_{V(\mu-k\alpha)}}$ are non-trivial for all $k$ between $0$ and the integer $\langle \alpha,\mu\rangle$.
\end{lemma}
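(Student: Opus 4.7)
The plan is to apply standard $\mathfrak{sl}(2,\CC)$ representation theory to the three-dimensional subalgebra $\mathfrak s_\alpha:=\Span_\CC\{H_\alpha,X_\alpha,X_{-\alpha}\}$ of $\mathfrak u_\CC$, which is isomorphic to $\mathfrak{sl}(2,\CC)$ by \eqref{sperootvect}. The restriction of $(\pi,V)$ to $\mathfrak s_\alpha$ is a finite-dimensional, hence completely reducible, representation of $\mathfrak{sl}(2,\CC)$.

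First, I would reduce to the $\alpha$-string through $\mu$: set $V':=\bigoplus_{j\in\ZZ}V(\mu+j\alpha)$, which is $\pi(\mathfrak s_\alpha)$-invariant by \eqref{eq:E_alphaV(mu)}. On $V'$ the operator $\pi(H_\alpha)$ is diagonal with $V(\mu+j\alpha)$ being its eigenspace for eigenvalue $\langle\mu,\alpha\rangle+j\langle\alpha,\alpha\rangle$; these eigenvalues are pairwise distinct in $j$, so the weight space decomposition of $V'$ as an $\mathfrak s_\alpha$-module coincides with the one inherited from $V$. Decompose $V'=\bigoplus_i L(n_i)$ into $\mathfrak{sl}(2,\CC)$-irreducibles and fix a nonzero $v\in V(\mu)$, writing $v=\sum_i v_i$ with $v_i\in L(n_i)\cap V(\mu)$. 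Each nonzero $v_i$ has $\mathfrak{sl}(2)$-weight $2\langle\mu,\alpha\rangle/\langle\alpha,\alpha\rangle$, so $n_i$ is at least the absolute value of this integer.

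Assuming $\langle\alpha,\mu\rangle>0$ (the symmetric case with $X_\alpha$ and $X_{-\alpha}$ exchanged covers $\langle\alpha,\mu\rangle<0$), standard $\mathfrak{sl}(2,\CC)$ theory guarantees that $\pi(X_{-\alpha})^k v_i\in L(n_i)\cap V(\mu-k\alpha)$ is nonzero for every integer $k$ in the claimed range. Since the summands $L(n_i)$ are linearly independent, $u:=\pi(X_{-\alpha})^k v=\sum_i \pi(X_{-\alpha})^k v_i$ is a nonzero element of $V(\mu-k\alpha)$; in particular $V(\mu-k\alpha)\neq 0$. I would then expand
\begin{equation*}
\pi(X^\alpha)u=\pi(X_\alpha)u-\pi(X_{-\alpha})u,\qquad \pi(Y^\alpha)u=\mi\pi(X_\alpha)u+\mi\pi(X_{-\alpha})u,
\end{equation*}
and note that the two summands on each right-hand side lie in the distinct weight spaces $V(\mu-(k-1)\alpha)$ and $V(\mu-(k+1)\alpha)$, so they cannot cancel one another; within each $L(n_i)$, the two $\mathfrak{sl}(2)$-images of $\pi(X_{-\alpha})^k v_i$ can simultaneously vanish only if $L(n_i)$ is trivial, contradicting $n_i\geq|\langle\alpha,\mu\rangle|>0$. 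This would force $\pi(X^\alpha)u\neq 0$ and $\pi(Y^\alpha)u\neq 0$, yielding the desired non-triviality of both restrictions.

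The main technical nuisance will be bookkeeping the rescaling between the $\mathfrak u_\CC$-level quantity $\langle\alpha,\mu\rangle$ and the integer $\mathfrak{sl}(2,\CC)$-weight (so as to justify that $k$ really runs through the advertised range without falling off either end of some $L(n_i)$), together with carefully ruling out simultaneous vanishing of the $\pi(X_\alpha)$- and $\pi(X_{-\alpha})$-contributions at the extreme values $k=0$ and $k=\langle\alpha,\mu\rangle$. Once this is addressed, everything else is routine $\mathfrak{sl}(2,\CC)$-representation theory.
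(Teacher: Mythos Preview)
Your approach is correct and is essentially the same as the paper's: both restrict to the $\mathfrak{sl}(2,\CC)$-subalgebra spanned by $H_\alpha,X_\alpha,X_{-\alpha}$, use standard $\mathfrak{sl}(2,\CC)$ theory to see that $\pi(X_{-\alpha})^k v\neq 0$ (or $\pi(X_{\alpha})^k v\neq 0$ in the negative case) for the relevant range of $k$, and then deduce non-triviality of $\pi(X^\alpha)$ and $\pi(Y^\alpha)$ on $V(\mu-k\alpha)$. If anything, you are more careful than the paper: you decompose $v$ along irreducible summands rather than speaking of ``the'' irreducible submodule containing $v$, and you spell out why $\pi(X^\alpha)u$ and $\pi(Y^\alpha)u$ cannot vanish (the two pieces land in distinct weight spaces and, at the extreme values of $k$, at least one of $\pi(X_{\pm\alpha})u$ is nonzero because each $L(n_i)$ has $n_i>0$). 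The paper simply asserts this last implication.
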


\begin{proof}
It is well known that $\mathfrak a_\alpha:= \Span_\CC \{H_\alpha,X_{\alpha},X_{-\alpha}\}$ is isomorphic to $\mathfrak{sl}(2,\CC)$.
Let $v\in V(\mu)$ with $v\neq0$.

We first assume that the integer $m:=\langle \alpha,\mu\rangle$ is positive.
From the well-known representation theory of $\mathfrak{sl}(2,\CC)$ (see for instance \cite[\S{}I.9]{Kn}), since
\begin{equation}
\pi(H_\alpha) v=\mu(H_\alpha) v= \langle \mu,\alpha\rangle v=m v,
\end{equation}
we deduce that the irreducible $\mathfrak a_\alpha$-submodule of $V$ containing $v$ has dimension at least $m+1$, and moreover, $\pi(X_{-\alpha})^k v\neq0$ for all $0\leq k\leq m$.
Consequently, $\pi(X^\alpha)_{|_{V(\mu-k\alpha)}}$ and $\pi(Y^\alpha)_{|_{V(\mu-k\alpha)}}$ are non-trivial for all $0\leq k\leq m$.

We now assume that $m$ is negative.
Analogously as above, $\pi(X_{\alpha})^k v\neq0$ for all $m\leq k\leq 0$, thus $\pi(X^\alpha)_{|_{V(\mu-k\alpha)}}$ and $\pi(Y^\alpha)_{|_{V(\mu-k\alpha)}}$ are non-trivial.
\end{proof}

\subsection{Weyl chamber approach}
The following result is the first approach to use Theorem~\ref{theTheo}.
The main result of the paper, Theorem~\ref{thm:main-simple}, is proved below as a consequence of this approach.

\begin{theorem} \label{thm:Weylchamber}
Let $(\pi,V)$ be a complex representation of $\mathfrak u_\CC$.
We assume that there is a weight $\mu_0$ of $\pi$ lying in a Weyl chamber.
By setting $V_1= V(\mu_0)$ and $V_2=\bigoplus_{\mu\neq \mu_0} V(\mu)$, all the properties in Theorem~\ref{theTheo} hold.
Consequently,  the real Lie algebra $\mathfrak l(\mathfrak u,\pi)$
admits an inner product with negative Ricci curvature.
\end{theorem}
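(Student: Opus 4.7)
The plan is to assemble the two prior results, Proposition~\ref{prop:V_1(S)} and Lemma~\ref{lem:sl(2,C)-modulo}: between them they cover all six hypotheses of Theorem~\ref{theTheo}. The first step is to invoke Proposition~\ref{prop:V_1(S)} with the singleton set $\mathcal{S}=\{\mu_0\}$, which is precisely the ``one element'' special case of that proposition. This at once supplies a real inner product on $V=V_1\oplus V_2$ satisfying conditions \eqref{item:H^alpha-invariant}, \eqref{item:X^alphaV_1subsetV_2}, \eqref{item:orthogonal}, \eqref{item:skewsymmetric}, and \eqref{item:trace=0} of Theorem~\ref{theTheo}. The only remaining hypothesis is the non-triviality condition \eqref{item:non-trivial}.

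Second, to handle \eqref{item:non-trivial}, I would use the defining property of a Weyl chamber: since $\mu_0$ lies in one, we have $\langle\mu_0,\alpha\rangle\neq 0$ for every root $\alpha\in\Delta$, and in particular for every $\alpha\in\Delta^+$. Lemma~\ref{lem:sl(2,C)-modulo}, applied with $\mu=\mu_0$ and $k=0$, then yields that both $\pi(X^\alpha)_{|_{V(\mu_0)}}$ and $\pi(Y^\alpha)_{|_{V(\mu_0)}}$ are non-zero for every $\alpha\in\Delta^+$. Since $V_1=V(\mu_0)$ by construction, this is exactly \eqref{item:non-trivial}. With all six conditions verified, Theorem~\ref{theTheo} immediately produces the desired inner product with negative Ricci curvature on $\mathfrak l(\mathfrak u,\pi)$.

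The expected main obstacle is essentially nil: the combinatorial bookkeeping for orthogonality, skew-symmetry, and trace vanishing has already been encapsulated in Proposition~\ref{prop:V_1(S)}, and the $\mathfrak{sl}(2,\mathbb{C})$-module input is encapsulated in Lemma~\ref{lem:sl(2,C)-modulo}. What must be recorded is only the translation ``$\mu_0$ in a Weyl chamber'' $\iff$ ``$\langle\mu_0,\alpha\rangle\neq 0$ for every $\alpha\in\Delta$'', which is immediate from the description recalled in Subsection~\ref{subsec:weights}; indeed, the Weyl chamber hypothesis is engineered precisely to supply the non-zero pairings needed to activate Lemma~\ref{lem:sl(2,C)-modulo} simultaneously at $k=0$ for every positive root $\alpha$.
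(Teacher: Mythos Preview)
Your proposal is correct and matches the paper's own proof essentially line for line: the paper also invokes Proposition~\ref{prop:V_1(S)} with $\mathcal S=\{\mu_0\}$ to obtain all properties except \eqref{item:non-trivial}, and then appeals to Lemma~\ref{lem:sl(2,C)-modulo} using that $\langle\mu_0,\alpha\rangle\neq 0$ for every $\alpha\in\Delta^+$ since $\mu_0$ lies in a Weyl chamber.
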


\begin{proof}
The decomposition $V=V_1\oplus V_2$ chosen is as in Proposition~\ref{prop:V_1(S)} with $\mathcal S=\{\mu_0\}$, thus it only remains to show property \eqref{item:non-trivial}.
This follows immediately from Lemma~\ref{lem:sl(2,C)-modulo} since $\langle \mu_0,\alpha\rangle \neq 0$ for all $\alpha\in\Delta^+$ because $\mu_0$ is not orthogonal to any root from being inside a Weyl chamber.
\end{proof}

We are now in position to prove the main theorem.

\begin{proof}[Proof of Theorem~\ref{thm:main-simple}]
By applying Theorem~\ref{theTheo}, Theorem~\ref{thm:Weylchamber} tell us that for every complex representation $(\pi,V)$ of $\mathfrak u_\CC$ containing a weight in a chamber Weyl, $\mathfrak l(\mathfrak u,\pi)$ admits an inner product with negative Ricci curvature.
Then, Lemma~\ref{lem:allbutfinitelymany} ensures that this property holds for all but finitely many irreducible representations of $\mathfrak u_\CC$, which completes the proof.
\end{proof}

\begin{theorem}\label{thm:main-semisimple}
	Let $\mathfrak u $ be a compact semisimple real Lie algebra.
	The real Lie algebra  $\mathfrak l(\mathfrak u,\pi)$ given by \eqref{l} admits an inner product with negative Ricci curvature for infinitely many finite-dimensional irreducible complex representations $(\pi,V)$ of $\mathfrak u_\CC$.
\end{theorem}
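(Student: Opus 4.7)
The plan is to reduce to the simple case handled in Lemma~\ref{lem:allbutfinitelymany} by exploiting the direct sum decomposition of $\mathfrak u_\CC$ together with the tensor product description of its irreducible representations. First, I would write $\mathfrak u=\mathfrak u_1\oplus\cdots\oplus\mathfrak u_k$ as a direct sum of compact simple ideals, so that $\mathfrak u_\CC=\mathfrak g_1\oplus\cdots\oplus\mathfrak g_k$ with each $\mathfrak g_i:=(\mathfrak u_i)_\CC$ complex simple. Choosing Cartan subalgebras $\mathfrak h_i\subset\mathfrak g_i$ and positive systems $\Delta_i^+$, the Cartan subalgebra $\mathfrak h=\bigoplus_i\mathfrak h_i$, the root system $\Delta=\bigsqcup_i\Delta_i$, and the inner product on $\mathfrak h_\RR^*$ all split along the factors. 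In particular, a Weyl chamber of $\mathfrak u_\CC$ is exactly a product of one Weyl chamber chosen from each $\mathfrak g_i$.

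Next, recall that every irreducible complex representation of $\mathfrak u_\CC$ is of the form $V_\lambda=V_{\lambda_1}\otimes\cdots\otimes V_{\lambda_k}$ with $\lambda_i\in P^+(\mathfrak g_i)$ and $\lambda=\lambda_1+\cdots+\lambda_k$, and its weight set equals $\{\mu_1+\cdots+\mu_k:\mu_i\text{ is a weight of }V_{\lambda_i}\}$. By Lemma~\ref{lem:allbutfinitelymany}, for each $i$ there exist infinitely many $\lambda_i\in P^+(\mathfrak g_i)$ for which $V_{\lambda_i}$ contains a weight $\mu_i$ lying in some Weyl chamber of $\mathfrak g_i$. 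Fixing such a choice of $(\lambda_i,\mu_i)$ for $i=2,\dots,k$ and letting $\lambda_1$ range over the infinitely many admissible highest weights in the first factor (with associated $\mu_1$), I obtain infinitely many distinct irreducible representations $(\pi_\lambda,V_\lambda)$ of $\mathfrak u_\CC$ in which $\mu_1+\cdots+\mu_k$ is a weight lying in the product Weyl chamber, that is, in a Weyl chamber of $\mathfrak u_\CC$.

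Finally, Theorem~\ref{thm:Weylchamber} applied to each such $(\pi_\lambda,V_\lambda)$ produces an inner product on $\mathfrak l(\mathfrak u,\pi_\lambda)$ with negative Ricci curvature, yielding the required infinite family. There is essentially no obstacle here once Theorem~\ref{thm:main-simple} and its underlying Lemma~\ref{lem:allbutfinitelymany} are available; the only point requiring a short check is the compatibility of the Weyl chamber structure with the direct sum decomposition, which is immediate since the inner product on $\mathfrak h_\RR^*$ and the root system both split, so belonging to a chamber of $\mathfrak u_\CC$ is precisely belonging to a chamber in each factor.
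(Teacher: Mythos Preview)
Your argument is correct and follows essentially the same route as the paper: decompose $\mathfrak u_\CC$ into simple factors, use Lemma~\ref{lem:allbutfinitelymany} on each factor to find weights in a Weyl chamber, take their sum as a weight of the tensor product lying in a Weyl chamber of $\mathfrak u_\CC$, and conclude via Theorem~\ref{thm:Weylchamber}. Your version is slightly more explicit about why the resulting family is infinite (fixing factors $2,\dots,k$ and varying the first), which the paper leaves implicit.
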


\begin{proof}
There are $\mathfrak u_1,\dots,\mathfrak u_m$ real compact Lie algebras such that
\begin{align*}
\mathfrak u &\simeq \bigoplus_{j=1}^m \mathfrak u^{(j)}
&
\mathfrak u_\CC&\simeq \bigoplus_{j=1}^m \mathfrak u_\CC^{(j)},
\end{align*}
and $\mathfrak u_\CC^{(j)}$ is simple for all $j$.
There is also a root system $\Delta= \bigcup_{j=1}^m \Delta^{(j)}$ of $\mathfrak u_\CC$ such that $\Delta^{(j)}$ is a root system of $\mathfrak u_\CC^{(j)}$ with associated real compact form $\mathfrak u^{(j)}$.

Every irreducible representation of $\mathfrak u_\CC$ is of the form $(\pi,V^{(1)}\otimes\dots\otimes V^{(m)})$, where $(\pi^{(j)},V^{(j)})$ is an irreducible representation of $\mathfrak u_\CC^{(j)}$ for all $j$ and
\begin{equation}
\pi(X_1,\dots,X_m) \; v^{(1)}\otimes \dots\otimes v^{(m)}=\sum_{j=1}^m v^{(1)}\otimes \dots\otimes \pi^{(j)}(X_j) v_j \otimes \dots\otimes v^{(m)}
\end{equation}
for all $X_j\in\mathfrak u_\CC^{(j)}$ and $v^{(j)}\in V^{(j)}$ for all $j$.

By Lemma~\ref{lem:allbutfinitelymany}, there are infinitely many irreducible representations $(\pi,V^{(1)}\otimes\dots\otimes V^{(m)})$ of $\mathfrak u_\CC$ such that, for every $j$, $V^{(j)}$ contains a weight  $\eta^{(j)}$ in a Weyl chamber associated to $\Delta^{(j)}$.
It follows that $\eta:=\eta^{(1)}+\dots+ \eta^{(m)}$ is in a Weyl chamber of $\Delta$.
In fact, if $\alpha\in \Delta$, then $\alpha\in\Delta^{(j)}$ for some $j$, thus $\langle \alpha,\eta\rangle = \langle \alpha,\eta^{(j)}\rangle \neq0$.
The proof follows from Theorem~\ref{thm:Weylchamber}.
\end{proof}

\begin{remark}
For $\mathfrak g$ a complex semisimple non-simple Lie algebra, Lemma~\ref{lem:allbutfinitelymany} is not longer true.
We assume for simplicity that $\mathfrak g=\mathfrak g_1\oplus \mathfrak g_2$.
We fix $V$ an irreducible complex representation of $\mathfrak g_1$ having no weight in a Weyl chamber.
Thus, for any irreducible complex representation $W$ of $\mathfrak g_2$, the irreducible complex representation $V\otimes W$ of $\mathfrak g$ does not contain any weight in a Weyl chamber, and of course, there are infinitely many such representations.
\end{remark}

\subsection{Other approaches}

The next result is our second approach to use Theorem~\ref{theTheo}.
We call it the \emph{Weyl group orbit approach} because $\mathcal S$ is the Weyl orbit of a weight of the representations satisfying certain conditions.

\begin{theorem} \label{thm:S=pesosextremales}
Let $(\pi,V)$ be a complex representation of $\mathfrak u_\CC$.
We assume there is a non-zero weight $\mu_0$ of $\pi$ such that  $\mu_0+\alpha\notin W\cdot\mu_0$ for all $\alpha\in\Delta$.
By setting
\begin{align*}
V_1&= \bigoplus_{w\in W} V(w\cdot \mu_0),& V_2&=\bigoplus_{\mu\notin W\cdot\mu_0} V(\mu),
\end{align*}
all the properties in Theorem~\ref{theTheo} hold.
Consequently,  the real Lie algebra $\mathfrak l(\mathfrak u,\pi)$
admits an inner product with negative Ricci curvature.
\end{theorem}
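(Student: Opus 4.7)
The plan is to reduce the verification to Proposition~\ref{prop:V_1(S)} together with Lemma~\ref{lem:sl(2,C)-modulo}. Since $\mathcal S:=W\cdot \mu_0$ is a set of weights of $\pi$ and $V_1,V_2$ are built from it exactly as in Proposition~\ref{prop:V_1(S)}, applying that proposition with $\mathcal S = W\cdot\mu_0$ instantly delivers properties \eqref{item:H^alpha-invariant}, \eqref{item:orthogonal}, \eqref{item:skewsymmetric}, and \eqref{item:trace=0} of Theorem~\ref{theTheo}. Thus only \eqref{item:X^alphaV_1subsetV_2} and \eqref{item:non-trivial} remain, and these are precisely where the hypotheses $\mu_0+\alpha\notin W\cdot\mu_0$ and $\mu_0\neq 0$ enter.

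For property \eqref{item:X^alphaV_1subsetV_2}, I would fix $\alpha\in\Delta^+$ and $w\in W$. By \eqref{eq:E_alphaV(mu)} applied to the root vectors $X_{\pm\alpha}\in (\mathfrak u_\CC)_{\pm\alpha}$, we have
\[
\pi(X^\alpha)V(w\cdot\mu_0),\;\pi(Y^\alpha)V(w\cdot\mu_0)\subset V(w\cdot\mu_0+\alpha)\oplus V(w\cdot\mu_0-\alpha).
\]
So it suffices to show $w\cdot\mu_0\pm\alpha\notin W\cdot\mu_0$. If instead $w\cdot\mu_0+\alpha=w'\cdot\mu_0$ for some $w'\in W$, then applying $w^{-1}$ gives $\mu_0+w^{-1}\alpha=w^{-1}w'\cdot\mu_0\in W\cdot\mu_0$; since $w^{-1}\alpha\in\Delta$ (the Weyl group preserves $\Delta$), this contradicts the hypothesis. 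The case $-\alpha$ is identical.

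For property \eqref{item:non-trivial}, by Lemma~\ref{lem:sl(2,C)-modulo} applied with $k=0$ and $\mu=w\cdot\mu_0$, it is enough to exhibit, for each $\alpha\in\Delta^+$, some $w\in W$ with $\langle \alpha, w\cdot\mu_0\rangle\neq 0$, for then $\pi(X^\alpha)_{|V(w\cdot\mu_0)}$ and $\pi(Y^\alpha)_{|V(w\cdot\mu_0)}$ are non-trivial and these subspaces sit inside $V_1$. Using that $W$ acts by isometries, $\langle\alpha,w\cdot\mu_0\rangle=\langle w^{-1}\alpha,\mu_0\rangle$, so existence of such $w$ is equivalent to $\mu_0$ not being orthogonal to the full Weyl orbit $W\cdot\alpha$. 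In the irreducible root system case, $W\cdot\alpha$ consists of all roots of a given length and spans $\mathfrak h_\RR^*$, so orthogonality of $\mu_0$ to $W\cdot\alpha$ forces $\mu_0=0$, contradicting the hypothesis. In the reducible (semisimple) case one decomposes $\Delta=\bigcup_j \Delta^{(j)}$ and uses the same argument on the simple component containing $\alpha$, with the projection $\mu_0^{(j)}$ of $\mu_0$ playing the role of $\mu_0$; one checks that $\mu_0^{(j)}=0$ would force $\mu_0+\alpha'\notin W\cdot\mu_0$ trivially for every $\alpha'\in\Delta^{(j)}$, and this degenerate situation is excluded by observing that the hypothesis $\mu_0\neq 0$ plus the role of $\mu_0^{(j)}$ in the orbit structure gives what is needed.

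The main obstacle will be the last step: namely, making sure that the Weyl orbit of an arbitrary root $\alpha$ provides a vector non-orthogonal to $\mu_0$. The spanning argument is painless in the simple case, but the reducible case requires either a small sharpening of the hypothesis (for instance, assuming $\mu_0$ has nonzero projection on every simple component that meets $\Delta$) or a careful case analysis showing that the condition $\mu_0+\alpha\notin W\cdot\mu_0$ for all $\alpha\in\Delta$ already forces such non-degeneracy. Everything else is bookkeeping from Proposition~\ref{prop:V_1(S)}, so this is the only genuinely non-trivial point.
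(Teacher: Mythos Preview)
Your approach is essentially the same as the paper's: both invoke Proposition~\ref{prop:V_1(S)} with $\mathcal S=W\cdot\mu_0$ for properties \eqref{item:H^alpha-invariant}, \eqref{item:orthogonal}, \eqref{item:skewsymmetric}, \eqref{item:trace=0}, and both handle \eqref{item:X^alphaV_1subsetV_2} by the identical $w^{-1}$-translation argument. The only difference is in \eqref{item:non-trivial}: to produce $w$ with $\langle\alpha,w\cdot\mu_0\rangle\neq0$, the paper argues that $\Span_\CC(W\cdot\mu_0)=\mathfrak h^*$ (since $\mu_0\neq0$), whereas you argue dually that $W\cdot\alpha$ spans $\mathfrak h_\RR^*$; these are two sides of the same coin.

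Your worry about the reducible case is legitimate, but you are not missing anything the paper supplies: the paper's one-line justification ``$\Span_\CC(W\cdot\mu_0)=\mathfrak h^*$ since $\mu_0\neq0$'' has exactly the same issue when $\mu_0$ has zero projection on some simple factor of $\mathfrak u_\CC$. In the simple case (which is where the theorem is actually applied in the paper) both arguments are complete, so your proof matches the paper's at the level of rigor it provides.
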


\begin{proof}
The decomposition $V=V_1\oplus V_2$ coincides with the one from Proposition~\ref{prop:V_1(S)} by taking $\mathcal S=W\cdot \mu_0$, the Weyl orbit of $\mu_0$.
It follows that properties \eqref{item:H^alpha-invariant}, \eqref{item:orthogonal}, \eqref{item:skewsymmetric}, and \eqref{item:trace=0} hold.

For $\alpha\in\Delta^+$ and $w\in W$, $\pi(X^\alpha)$ and $\pi(Y^\alpha)$ map $V(w\cdot\mu_0)$ to $V(w\cdot\mu_0+\alpha)\oplus V(w\cdot\mu_0-\alpha)$ by \eqref{eq:E_alphaV(mu)}.
If $V(w\cdot\mu_0\pm \alpha) \subset V_1$, then $w\cdot\mu_0\pm\alpha\in W\cdot\mu_0$, thus $\mu_0\pm w^{-1}\cdot\alpha\in W\cdot\mu_0$, which contradicts the assumption since $W(\Delta)\subset\Delta$.
Hence, $\pi(X^\alpha)$ and $\pi(Y^\alpha)$ map $V(w\cdot\mu_0)$ into $V_2$ for all $\alpha\in\Delta^+$ and $w\in W$, thus property \eqref{item:X^alphaV_1subsetV_2} holds.

It only remains to establish the validity of \eqref{item:non-trivial}.
We fix $\alpha\in\Delta^+$.
Let $w_0\in W$ satisfying that $\langle \alpha,w_0\cdot\mu_0\rangle\neq0$.
Such element exists because $\alpha\neq0$ and $\Span_\CC(W\cdot \mu_0)=\mathfrak h^*$ (since $\mu_0\neq0$).
We assume that the integer $m:=\langle \alpha,w_0\cdot\mu_0\rangle$ is positive; the negative case is analogous.

Let $v\in V(w_0\cdot\mu_0)$ with $v\neq0$.
We have that $\pi(H_\alpha) v=(w_0\cdot\mu_0)(H_\alpha)v= \langle \alpha,w_0\cdot\mu_0\rangle v = mv$.
Similarly as in the proof of Lemma~\ref{lem:sl(2,C)-modulo}, the representation theory of $\mathfrak{sl}(2,\CC)$ implies that $\pi(X_{-\alpha})^kv\neq0$ for all $0\leq k\leq m$.
In particular, $\pi(X_{-\alpha})v\neq0$, hence $\pi(X^\alpha)_{|_{V_1}}$ and $\pi(Y^\alpha)_{|_{V_1}}$ are non-trivial.
This completes the proof.
\end{proof}

The Weyl group orbit approach has been implicitly used in \cite{CRN}.
In fact, the proof of \cite[ Thm.~1.1]{CRN} decomposes any polynomial representation  $\pi$ of a classical Lie algebra as in Theorem~\ref{thm:S=pesosextremales} with $\mu_0$ the highest weight of $\pi$.

We now state the \emph{zero weight approach} which picks $\mathcal S=\{0\}$.

\begin{theorem} \label{thm:zeroweight}
Let $(\pi,V)$ be a complex representation of $\mathfrak u_\CC$.
We assume that $0$ and all the roots are weights of $\pi$.
By setting $V_1=V(0)$ and $V_2=\bigoplus_{\mu\neq0} V(\mu)$, all the properties in Theorem~\ref{theTheo} hold.
Consequently, the real Lie algebra $\mathfrak l(\mathfrak u,\pi)$
admits an inner product with negative Ricci curvature.
\end{theorem}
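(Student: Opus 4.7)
The plan is to verify each of the six conditions of Theorem~\ref{theTheo} for the decomposition $V=V(0)\oplus \bigoplus_{\mu\ne 0} V(\mu)$. The crucial first observation is that this is precisely the setup of Proposition~\ref{prop:V_1(S)} with the singleton subset $\mathcal{S}=\{0\}$. Since the proposition's final clause applies exactly when $\mathcal{S}$ has one element, it delivers conditions \eqref{item:H^alpha-invariant}, \eqref{item:X^alphaV_1subsetV_2}, \eqref{item:orthogonal}, \eqref{item:skewsymmetric}, and \eqref{item:trace=0} in one stroke. The hypothesis that $0$ is a weight of $\pi$ enters at this point only to guarantee $V_1=V(0)\ne 0$, so that the construction is non-degenerate.

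The only remaining task is \eqref{item:non-trivial}: for each $\alpha\in\Delta^+$, the restrictions $\pi(X^\alpha)|_{V(0)}$ and $\pi(Y^\alpha)|_{V(0)}$ must be non-trivial. This is where the second hypothesis, that every root is a weight of $\pi$, comes in: it gives $V(\alpha)\ne 0$ for each $\alpha\in\Delta^+$. The plan is then to invoke Lemma~\ref{lem:sl(2,C)-modulo} with $\mu=\alpha$ (a weight of $\pi$) and $k=1$; since $\langle\alpha,\alpha\rangle>0$, the lemma immediately yields the desired non-triviality on $V(\mu-\alpha)=V(0)$.

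Equivalently, and to make the $\mathfrak{sl}(2,\CC)$-mechanism transparent, one can argue by hand using the subalgebra $\mathfrak{a}_\alpha=\Span_\CC\{H_\alpha,X_\alpha,X_{-\alpha}\}$. The $\alpha$-string $\bigoplus_{k\in\ZZ} V(k\alpha)$ is $\mathfrak{a}_\alpha$-stable, and any irreducible $\mathfrak{a}_\alpha$-summand meeting $V(\alpha)$ nontrivially has top $\mathfrak{a}_\alpha$-weight of the same parity as, and no smaller than, that of $V(\alpha)$, so by the representation theory of $\mathfrak{sl}(2,\CC)$ it contains a non-zero zero-weight vector $u$, and this $u$ lies in $V(0)$. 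Applying the raising operator produces $\pi(X_\alpha)u$, a non-zero vector in $V(\alpha)$. Because $V(\alpha)$ and $V(-\alpha)$ are distinct weight spaces of $V$, both $\pi(X^\alpha)u=\pi(X_\alpha)u-\pi(X_{-\alpha})u$ and $\pi(Y^\alpha)u=\mi(\pi(X_\alpha)u+\pi(X_{-\alpha})u)$ have a non-zero $V(\alpha)$-component, hence are non-zero.

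With all six conditions checked, Theorem~\ref{theTheo} produces the inner product on $\mathfrak{l}(\mathfrak u,\pi)$ with negative Ricci curvature. I do not expect any real obstacle in executing this plan: Proposition~\ref{prop:V_1(S)} and Lemma~\ref{lem:sl(2,C)-modulo} have already packaged all the serious work, and this theorem is essentially the specialization of that machinery to the simplest nontrivial choice $\mathcal{S}=\{0\}$. The role of the two hypotheses is clean and minimal, each being used in exactly one place: the zero-weight hypothesis to make $V_1$ nonzero, and the all-roots-are-weights hypothesis to run the $\mathfrak{sl}(2,\CC)$ argument once per positive root.
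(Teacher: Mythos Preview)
Your proposal is correct and follows essentially the same approach as the paper's proof: invoke Proposition~\ref{prop:V_1(S)} with $\mathcal S=\{0\}$ to obtain all properties except \eqref{item:non-trivial}, and then apply Lemma~\ref{lem:sl(2,C)-modulo} with $\mu=\alpha$ (using $\langle\alpha,\alpha\rangle>0$) to conclude that $\pi(X^\alpha)|_{V(0)}$ and $\pi(Y^\alpha)|_{V(0)}$ are non-trivial. Your additional direct $\mathfrak{sl}(2,\CC)$ argument is a nice unpacking of what the lemma encodes, but it is not a different route.
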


\begin{proof}
The decomposition $V=V_1\oplus V_2$ is as in Proposition~\ref{prop:V_1(S)} with $\mathcal S=\{0\}$, thus it only remains to check that property \eqref{item:non-trivial} holds.

Let $\alpha\in\Delta^+$.
By assumption, $\alpha$ is a weight of $\pi$, that is, $V(\alpha)\neq0$.
Since $\langle \alpha,\alpha\rangle$ is a positive integer, Lemma~\ref{lem:sl(2,C)-modulo} ensures that $\pi(X^\alpha)_{|_{V(0)}}$ and $\pi(Y^\alpha)_{|_{V(0)}}$ are non-trivial.
\end{proof}

\begin{remark} 
It is clear that the adjoint representation of $\mathfrak u_{\CC}$ satisfies the assumptions of Theorem~\ref{thm:zeroweight}. 
\end{remark}

\section{Explicit examples}\label{sec:examples}

This section contains many explicit examples of metric Lie algebras with negative Ricci operator constructed from Theorems~\ref{thm:Weylchamber}, \ref{thm:S=pesosextremales} and \ref{thm:zeroweight}.

We recall that the irreducible complex representations of $\mathfrak u_\CC$ are in correspondence with elements in $P^+(\mathfrak u)=\bigoplus_{i=1}^n \ZZ_{\geq0}\omega_i$, where $\omega_1,\dots,\omega_n$ are the fundamental weights corresponding to the simple root system $\Pi=\{\alpha_1,\dots,\alpha_n\}$.
In the sequel, when we consider a particular complex simple Lie algebra, we will use the positive root system chosen in \cite[\S{}C.1--2]{Kn}.

If $\lambda=\sum_{i=1}^n a_i\omega_i\in P^+(\mathfrak u_\CC)$ satisfies $a_i>0$ for all $i$, then $\lambda$ lies in the fundamental Weyl chamber $C^+$.
It follows immediately that $\mathfrak l(\mathfrak u,\pi_\lambda)$ admits an inner product having negative Ricci curvature from the Weyl chamber approach (Theorem~\ref{thm:Weylchamber}).
It remains to analyze those dominant integral weights lying in the faces of $C^+$, namely
$$
\left\{ \sum_{i=1}^n a_i\omega_i: a_i\in\NN\;\forall i\in I,\; a_i=0\;\forall i\notin I \right\}
$$
for each non-empty and proper subset $I$ of $\{1,\dots,n\}$.

When $\mathfrak u=\mathfrak{su}(2)$, every non-trivial dominant integral weight is in the fundamental Weyl chamber.
Consequently, $\mathfrak l(\mathfrak{su}(2),\pi)$ admits an inner product with negative Ricci curvature for all non-trivial complex irreducible representation $\pi$ of $\mathfrak u_\CC$.
Indeed, this fact was proved in \cite[\S{}3]{u2}.

We next classify, in the case when the rank of $\mathfrak u_\CC$ is two, the dominant integral weights $\lambda$ such that one of our approaches applies and therefore $\mathfrak l(\mathfrak u,\pi_\lambda)$ admits an inner product with negative Ricci operator.

\begin{proposition}\label{prop:rank2}
Let $\mathfrak u$ be a real Lie algebra such that $\mathfrak u_\CC$ is simple of rank two.
Then, $\mathfrak l(\mathfrak{u}, \pi_\lambda)$ admits an inner product with negative Ricci curvature for every $\lambda\in P^+(\mathfrak u_\CC)$ except possibly $\{0,\omega_1,\omega_2\}$ for types $\textup{A}_2$ and $\textup{B}_2=\textup{C}_2$, and $\{0,\omega_1\}$ for type $\textup{G}_2$.
\end{proposition}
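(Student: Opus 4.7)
The plan is to write every non-zero $\lambda \in P^+(\mathfrak u_\CC)$ as $\lambda = a\omega_1 + b\omega_2$ with $a, b \in \ZZ_{\geq 0}$, distinguish three cases, and apply one of Theorems~\ref{thm:Weylchamber}, \ref{thm:S=pesosextremales}, or \ref{thm:zeroweight} in each.

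First, when $a \geq 1$ and $b \geq 1$, the weight $\lambda$ lies strictly inside the fundamental Weyl chamber, so Theorem~\ref{thm:Weylchamber} applies with $\mu_0 = \lambda$.

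The principal case is $\lambda = a\omega_i$ with $a \geq 2$ and $i \in \{1, 2\}$. The plan is to apply the Weyl orbit approach (Theorem~\ref{thm:S=pesosextremales}) with $\mu_0 = \lambda$, so that $W\cdot\mu_0 = a(W\cdot\omega_i)$. The hypothesis $\mu_0 + \alpha \notin W\cdot\mu_0$ for every $\alpha\in\Delta$ becomes the assertion that no root of $\mathfrak u_\CC$ has the form $a(w\cdot\omega_i - \omega_i)$ with $w\cdot\omega_i\neq\omega_i$. I plan to argue uniformly: the quotient of $P(\mathfrak u_\CC)$ by the root lattice is fixed pointwise by $W$, so $w\cdot\omega_i - \omega_i$ always lies in the root lattice; a direct inspection of the root lattices of types $\textup{A}_2$, $\textup{B}_2$ and $\textup{G}_2$ shows that their shortest non-zero vectors are exactly the short roots, whence $|a(w\cdot\omega_i - \omega_i)| \geq 2|\alpha_{\mathrm{short}}|$. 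Since the ratio $|\alpha_{\mathrm{long}}|/|\alpha_{\mathrm{short}}|$ equals $1$, $\sqrt{2}$, $\sqrt{3}$ in the three rank-two types (all strictly less than $2$), the vector $a(w\cdot\omega_i - \omega_i)$ is strictly longer than every root and hence is not one.

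The remaining case is $\lambda = \omega_2$ in type $\textup{G}_2$, which corresponds to the $14$-dimensional adjoint representation of $\textup{G}_2$; its weight set is $\{0\}$ (with multiplicity $2$) together with the twelve roots of $\textup{G}_2$, so Theorem~\ref{thm:zeroweight} applies directly. The weights untreated by any of these cases are precisely $\{0, \omega_1, \omega_2\}$ for $\textup{A}_2$ and $\textup{B}_2$, and $\{0, \omega_1\}$ for $\textup{G}_2$, matching the excluded list in the statement. The main obstacle is the uniform length estimate in the middle case: it is sharp precisely in type $\textup{G}_2$ and combines the short-root bound on the root lattice with the universal inequality $|\alpha_{\mathrm{long}}|/|\alpha_{\mathrm{short}}|\leq \sqrt{3} < 2$.
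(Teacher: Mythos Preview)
Your argument is correct and, in the middle case, cleaner than the paper's. The paper treats $\lambda=a\omega_i$ with $a\geq 2$ by a patchwork: for type $\textup{G}_2$ it invokes the Weyl chamber approach for $a\omega_1$ with $a\geq 3$ and for $a\omega_2$ with $a\geq 2$ (exhibiting an interior weight $\lambda-\alpha_1$, resp.\ $\lambda-\alpha_1-\alpha_2$), handles $2\omega_1$ separately via the Weyl orbit approach, and for types $\textup{A}_2$ and $\textup{B}_2$ defers to a separate lemma (Lemma~\ref{lem:fundweight}) that verifies the Weyl orbit hypothesis by writing out $W\cdot a\omega_p$ explicitly in coordinates. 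Your length estimate replaces all of this with a single uniform argument: since $w\cdot\omega_i-\omega_i$ lies in the root lattice and the short roots are the shortest non-zero lattice vectors in each of the three rank-two root lattices, the inequality $2>\sqrt{3}\geq |\alpha_{\mathrm{long}}|/|\alpha_{\mathrm{short}}|$ rules out $a(w\cdot\omega_i-\omega_i)\in\Delta$ for every $a\geq 2$ at once. This is both shorter and more conceptual; the paper's route has the minor advantage that its explicit Lemma~\ref{lem:fundweight} covers all classical types of arbitrary rank, not just rank two, but that generality is not needed for the present proposition.
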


\begin{proof}

For short, we say that a complex representation $\pi$ of $\mathfrak u_\CC$ has Ricci negative curvature if $\mathfrak l(\mathfrak{u}, \pi_\lambda)$ admits an inner product with negative Ricci operator.

Every dominant integral weight is of the form $a\omega_1+b\omega_2$ for some non-negative integers $a,b$.
From the above discussion, the cases $ab\geq1$ follow from the Weyl chamber approach.
Hence, it only remains to check the cases $a\omega_1$ and $a\omega_2$ for $a\in\NN$.

We first assume that $\mathfrak u_\CC$ is of type $\textup{G}_2$.
Let $\alpha_1$ and $\alpha_2$ denote the simple roots, thus $\Delta^+=\{ \alpha_1,\alpha_2,\alpha_1+\alpha_2, 2\alpha_1+\alpha_2, 3\alpha_1+\alpha_2, 3\alpha_1+2\alpha_2 \}$ and the fundamental weights are $\omega_1=2\alpha_1+\alpha_2$ and $\omega_2=3\alpha_1+2\alpha_2$.

From the Weyl chamber approach, $\pi_{a\omega_1}$ (resp.\ $\pi_{a\omega_2}$) has negative Ricci operator if $a\geq3$ ($a\geq2$) since $\lambda-\alpha_1\in P_\lambda(\mathfrak u_\CC)\cap C^+$ ($\lambda-\alpha_1-\alpha_2\in P_\lambda(\mathfrak u_\CC)\cap C^+$).
The case $\lambda=\omega_2$ follows from the zero weight approach (Theorem~\ref{thm:zeroweight}) since $P_{\lambda}(\mathfrak u_\CC)= \Delta\cup \{0\}$.
The case $\lambda=2\omega_1$ follows from the Weyl orbit approach (Theorem~\ref{thm:S=pesosextremales}) since $\lambda+\alpha\notin W\cdot\lambda$ for all $\alpha\in\Delta$.

The rest of the cases follows from the more general result in Lemma~\ref{lem:fundweight} below.
\end{proof}

\begin{lemma}\label{lem:fundweight}
Let $\mathfrak u$ be a compact real simple Lie algebra such that $\mathfrak u_\CC$ is of classical type and rank $n$.
Then $\mathfrak l(\mathfrak u,\pi_{a\omega_p})$ admits an inner product with negative Ricci curvature for every $a\geq2$ and $1\leq p\leq n$.
\end{lemma}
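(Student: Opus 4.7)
The plan is to apply Theorem~\ref{thm:S=pesosextremales} (the Weyl group orbit approach) with $\mu_0 := a\omega_p$, the highest weight of $\pi_{a\omega_p}$. This $\mu_0$ is a non-zero weight of the representation because $\omega_p$ is a non-zero fundamental weight and $a\geq 2$. Hence only the condition $a\omega_p + \alpha \notin W\cdot(a\omega_p)$ for every $\alpha\in\Delta$ remains to be verified.

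The key step is to observe that $W$ acts by isometries on $\mathfrak h_\RR^*$, so any element of $W\cdot(a\omega_p)$ has the same norm as $a\omega_p$. It therefore suffices to check that $|a\omega_p+\alpha|^2 \neq |a\omega_p|^2$ for every $\alpha\in\Delta$. Expanding,
\[
|a\omega_p+\alpha|^2 - |a\omega_p|^2 \;=\; 2a\langle \omega_p,\alpha\rangle + |\alpha|^2 \;=\; |\alpha|^2\bigl(a\, m_\alpha + 1\bigr),
\]
where $m_\alpha := 2\langle \omega_p,\alpha\rangle/|\alpha|^2 \in \ZZ$ by definition of the weight lattice $P(\mathfrak u_\CC)$. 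Since $|\alpha|^2 > 0$ and $a\geq 2$, the equation $a\, m_\alpha = -1$ has no solution with $m_\alpha\in\ZZ$, so the displayed difference never vanishes; the hypothesis of Theorem~\ref{thm:S=pesosextremales} thus holds and the conclusion follows.

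The argument is essentially immediate once one settles on the squared-norm invariant and exploits the integrality of the Cartan integer $m_\alpha$, so there is no serious obstacle. I note that the bound $a\geq 2$ is sharp for this strategy: when $a=1$ one can have $m_\alpha = -1$ (for instance $\alpha = \varepsilon_j - \varepsilon_i$ with $i\leq p<j$ in type $\textup{A}_n$), which corresponds precisely to the exceptional representations $\pi_{\omega_p}$ left open in Proposition~\ref{prop:rank2}. Incidentally, the same reasoning works for any simple $\mathfrak u_\CC$, so the classical-type hypothesis is only included here because it suffices for the intended application.
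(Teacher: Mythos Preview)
Your proof is correct and takes a genuinely different route from the paper's. The paper proceeds type by type: for each classical root system it writes down $\Delta^+$, the fundamental weights $\omega_p$, the Weyl group $W$, and then checks explicitly that the orbit $W\cdot a\omega_p$ avoids every $a\omega_p+\alpha$ (only type $\textup{C}_n$ is written out, with the remark that the others are ``very similar''). Your argument replaces this case analysis by a single norm computation: since $W$ acts by isometries, it suffices that $|a\omega_p+\alpha|^2\neq|a\omega_p|^2$, and this reduces to the impossibility of $a\,m_\alpha=-1$ with $m_\alpha=2\langle\omega_p,\alpha\rangle/|\alpha|^2\in\ZZ$ and $a\geq 2$. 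This is shorter, uniform across all types (so, as you note, the classical hypothesis is unnecessary), and makes transparent exactly why the bound $a\geq 2$ enters. The paper's approach, by contrast, gives concrete descriptions of the orbits $W\cdot a\omega_p$ which could be useful elsewhere, but for the present lemma your method is cleaner.
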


\begin{proof}
Let $\mathfrak u_\CC$ be the simple complex Lie algebra of type $\textup{C}_n$ for some $n\geq2$.
There is a $\CC$-basis $\{\varepsilon_1,\dots, \varepsilon_{n}\}$ of $\mathfrak h^*$ such that $\Delta^+= \{\varepsilon_i\pm\varepsilon_j : 1\leq i<j\leq n\} \cup \{\varepsilon_{i}:1\leq i\leq n\}$, $\omega_p=\varepsilon_1+\dots+\varepsilon_p$ for any $1\leq p\leq n$, $W \simeq \mathbb S^n\times \{\pm1\}^n$ and the element $(\sigma,\{t_i\}_{i=1}^n)$ acts by $\sum_{i=1}^n a_i\varepsilon_i \mapsto \sum_{i=1}^n t_i a_{\sigma(i)}\varepsilon_i $.
Thus $W\cdot a\omega_p=\{\pm a\omega_i:1\leq i\leq n\}$.
It follows immediately that $a\omega_p+\alpha\notin W\cdot a\omega_p$ for all $\alpha\in\Delta$.
Thus, the Lemma follows by Theorem~\ref{thm:S=pesosextremales}.

The rest of the cases are very similar.
\end{proof}

\begin{remark}
One can easily check that the non-trivial exceptions in Proposition~\ref{prop:rank2} (i.e.\ $\pi_{\omega_1},\pi_{\omega_2}$ for types $\textup{A}_2$ and $\textup{B}_2$, and $\pi_{\omega_1}$ for type $\textup{G}_2$) do not follow from any of the three approaches.
In case it is possible, it would be interesting to find in any of the corresponding Lie algebras an inner product with negative Ricci curvature.
We note that beyond the solvable case there is no necessary condition in the literature for a Lie group to admit a metric with negative Ricci curvature. This makes the problem to prove that a Lie algebra does not admit such a metric, a challenging one.
\end{remark}

\section{General nilradical}\label{sec:generalnilradical}

Finally, we can use the same idea as in \cite[Thm.~5.4]{CRN} to get examples with a non-abelian nilradical. Explicitly, we will consider the more general setting of a Lie algebra $\ggo= (\RR Z \oplus \ug) \ltimes \ngo$  where $\ug$ is a compact semisimple Lie algebra acting on a nilpotent Lie algebra $\ngo$ by derivations and as always, $Z$ commutes with $\ug$.
Note that in order to get a Lie algebra, $\ad Z$ must be a derivation of $\ngo$ and therefore could not act as a multiple of the identity unless $\ngo$ is abelian. %Moreover, since $[Z,\ug]=0$, $\ad Z$ should act as a multiple of the identity in each of the irreducible components of $\ngo$ as a representation of $\ug$.

First we will prove a more general version of Theorem \ref{theTheo}.

\begin{theorem}\label{Z:positive}
Let $\ug$ be a compact semisimple Lie algebra, and let $(\pi,V)$ be a finite dimensional real representation of $\mathfrak u$.
We assume $V$ decomposes in $\mathfrak u$-submodules as $V= W_1 \oplus \dots \oplus W_k$ such that, for some index $i$, there exists a decomposition $W_i = V'_1 \oplus V'_2$ and a real inner product on $W_i$ satisfying properties (1)--(6) from Theorem~\ref{theTheo}.
Then, for all positive real numbers $c_1,\dots,c_k$, the Lie algebra $(\RR Z \oplus \ug) \ltimes V$ determined by ${\ad Z}_{|_\ug}=0$ and ${\ad Z}_{|_{W_i}}=c_i\Id_{W_i}$ admits a Ricci negative inner product.
\end{theorem}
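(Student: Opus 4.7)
The plan is to imitate the strategy behind Theorem~\ref{theTheo} (i.e., of \cite[Thm.~3.3]{CRN}): exhibit a continuous family of Lie brackets on the fixed underlying vector space of $\mathfrak g = \mathbb R Z \oplus \mathfrak u \oplus V$ that degenerates to a solvable limit $\mathfrak g_\infty$ admitting a Ricci negative inner product, and then conclude by continuity, using that being Ricci negative is an open condition on the variety of Lie brackets.

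A preliminary reduction is the rescaling $Z\mapsto Z/c_i$, which allows us to assume $c_i=1$ without loss of generality (the remaining $c_j$'s stay arbitrary positive reals, now equal to $c_j/c_i$). With this normalization, the subalgebra $(\mathbb R Z\oplus\mathfrak u)\ltimes W_i$ is precisely an instance of the setup of Theorem~\ref{theTheo}, so by that theorem it already carries a Ricci negative inner product $\langle\cdot,\cdot\rangle_i$ built via the $V_1'\oplus V_2'$ decomposition. We then construct an inner product $\langle\cdot,\cdot\rangle$ on the whole $\mathfrak g$ by combining three ingredients: on $(\mathbb R Z\oplus\mathfrak u)\ltimes W_i$ we take $\langle\cdot,\cdot\rangle_i$; on each $W_j$ with $j\neq i$ we fix an inner product for which $\pi(X)|_{W_j}$ is skew-symmetric for every $X\in\mathfrak u$ (which exists because $\mathfrak u$ is compact, cf.\ \eqref{inner_product}); and we declare $\mathbb R Z$, $\mathfrak u$, $W_1,\dots,W_k$ mutually orthogonal. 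Next, we introduce a parameter $t>0$ and a family of vector space automorphisms $\phi_t\in\operatorname{GL}(\mathfrak g)$ that reproduces the scaling used in the proof of Theorem~\ref{theTheo} on $\mathbb R Z\oplus\mathfrak u\oplus W_i$ and acts on each $W_j$ with $j\neq i$ by a suitable scalar $t^{e_j}\op{Id}_{W_j}$. The conjugated brackets $[\cdot,\cdot]_t := \phi_t^{-1}\bigl[\phi_t\cdot,\phi_t\cdot\bigr]$ form a continuous family of Lie brackets on $\mathfrak g$, all isomorphic to the original one for $t>0$, and converging as $t\to 0$ to a solvable limit $\mathfrak g_\infty$ in which $[\mathfrak u,\mathfrak u]_\infty=0$, $[\mathfrak u,W_j]_\infty=0$ for every $j\neq i$, and the action of $Z$ on $V$ is preserved.

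The main step, and the main obstacle, is computing the Ricci tensor of $(\mathfrak g_\infty,\langle\cdot,\cdot\rangle)$ and verifying that it is negative definite. The contributions of the extra modules $W_j$ for $j\neq i$ are handled by two observations. First, for any $X\neq Y$ in $\{X^\alpha,Y^\alpha:\alpha\in\Delta^+\}$, the trace $\tr\bigl(\pi(Y)|_{W_j}^t\,\pi(X)|_{W_j}\bigr)$ vanishes by the same weight-orthogonality argument used in Proposition~\ref{prop:V_1(S)}; this preserves the convenient block structure of the Ricci operator along $\mathfrak u$. Second, $\ad Z|_{W_j}=c_j\op{Id}_{W_j}$ with $c_j>0$ contributes to $\op{Ric}(Z,Z)$ a strictly negative term proportional to $-c_j^2\dim W_j$, while the skew-symmetry of $\pi(X)|_{W_j}$ yields a non-positive contribution to $\op{Ric}(X,X)$ for $X\in\mathfrak u$. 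Thus the additional summands $W_j$ only reinforce the Ricci negativity already established in \cite[Thm.~3.3]{CRN} on the $W_i$-piece, so $\mathfrak g_\infty$ admits a Ricci negative inner product. By continuity, so does $(\mathbb R Z\oplus\mathfrak u)\ltimes V$, completing the argument.
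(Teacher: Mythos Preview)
Your overall strategy---degenerate to a solvable limit, verify Ricci negativity there, conclude by continuity---is the same as the paper's and as in \cite{CRN}. However, you miss the key simplification that the paper exploits, and as a result your argument carries several gaps.

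The paper's central observation is that properties (1)--(6) of Theorem~\ref{theTheo}, assumed only for $W_i = V_1' \oplus V_2'$, automatically extend to the decomposition $V = V_1 \oplus V_2$ with $V_1 := V_1'$ and $V_2 := V_2' \oplus \bigoplus_{j \neq i} W_j$, once the inner product is extended orthogonally and $\mathfrak u$-invariantly to the remaining $W_j$. Having done this, the entire proof of \cite[Thm.~3.3]{CRN} (degeneration and Ricci computation on the solvable limit) applies verbatim to $(\RR Z \oplus \mathfrak u) \ltimes V$; the only change is that the mean curvature vector becomes $H = cZ$ with $c = \sum_j c_j \dim W_j$, and since $-c\,\ad Z|_V$ is still negative definite and block-diagonal in a basis adapted to $\bigoplus_j W_j$, the perturbation argument goes through unchanged. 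There is no need to treat the $W_j$ ($j \neq i$) separately or to introduce additional exponents $e_j$ in the degeneration.

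Your route, by contrast, is incomplete in several places. First, there is an internal inconsistency: you assert $[\mathfrak u, W_j]_\infty = 0$ in the limit, yet afterwards analyze $\pi(X)|_{W_j}$ and $\tr\bigl(\pi(Y)|_{W_j}^t\,\pi(X)|_{W_j}\bigr)$ as if these were nonzero. Second, the appeal to Proposition~\ref{prop:V_1(S)} for the vanishing of those traces is not justified as stated: that proposition concerns complex representations of $\mathfrak u_\CC$ restricted to $\mathfrak u$, whereas the $W_j$ here are arbitrary real $\mathfrak u$-submodules. Third, and most importantly, you never address the Ricci tensor \emph{on} the summands $W_j$ for $j \neq i$; you only argue that the $W_j$ contribute non-positively to $\Ricci$ along $\RR Z \oplus \mathfrak u$, which says nothing about whether $\Ricci(w,w) < 0$ for $w \in W_j$. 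Relatedly, adding the $W_j$ enlarges the mean curvature vector, which alters $\Ricci$ on $W_i$ as well; this must be checked rather than asserted to ``only reinforce'' negativity.
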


\begin{proof}
Assume $W_i= V'_1 \oplus V'_2$, $\langle\cdot,\cdot \rangle_i$ satisfies (1)--(6) from Theorem~\ref{theTheo}. It is not hard to check that all these hypotheses remain valid for $V=V_1 \oplus V_2$ and $\langle \cdot,\cdot \rangle$ where $V_1=V_1'$, $V_2 = V_2' \oplus \sum_{j\ne i} W_j$ and $\langle \cdot,\cdot \rangle$ is obtained by extending $\langle \cdot,\cdot \rangle_i$ to $V$ in such a way that $V= W_1 \oplus \dots \oplus W_k$ is an orthogonal decomposition and such that every element of $\mathfrak u$ acts as a skew-symmetric operator of $\sum_{j\ne i} W_j$ (see (\ref{inner_product})).
Then, Theorem~\ref{theTheo} proves the theorem for the case $c_1=\dots=c_k=1$.
Moreover, the case $c_1=\dots=c_k$ also follows from Theorem~\ref{theTheo} since positive rescaling at the element $Z$ in the definition \eqref{l} of $\mathfrak l(\mathfrak u, \pi)$ returns isomorphic Lie algebras.

For the general case, we can follow the same proof as in \cite[Thm.~3.3]{CRN} with minor changes coming from the fact that we have a different mean curvature vector on $\widetilde{\mathfrak l}$ even though it still is a multiple of $Z$.
We next sketch this proof by following the notation in \cite{CRN}.

Let $\widetilde{\mathfrak{l}}$
denote the Lie algebra $(\RR Z \oplus \ug) \ltimes V$ determined by ${\ad Z}_{|_\ug}=0$ and ${\ad Z}_{|_{W_i}}=c_i\Id_{W_i}$.
We first note that the same degeneration given in \cite[Lem.~3.1]{CRN} shows that $\widetilde{\mathfrak l}$ degenerates into the solvable Lie algebra $\widetilde{\mathfrak l}_\infty=(\RR Z \oplus \ug \oplus V, \mu)$ which only differs from  ${\mathfrak l}_\infty$ by the action of $Z$.
The next step is to show that $\widetilde{\mathfrak l}_\infty$ admits an inner product with negative Ricci operator.
Note that the mean curvature vector of $\widetilde{\mathfrak l}_\infty$ is
$$
H = c \, Z,\quad \text{ where } \, c=\tr (\ad_\mu Z) = \sum_{i=1}^k c_i \dim W_i.
$$
We proceed by noting that
$- c \ad Z$ is negative definite on $V$ and diagonalizes in any basis formed as a union of basis of $W_j$, so one can find an appropriated $\rho$.  Finally note that because of this diagonalization property, we can perturb the inner product on $V_1$ and still get a block diagonal Ricci operator as in \cite{CRN}.
\end{proof}

Let $\mathfrak g$ be a non-solvable real Lie algebra.
From its Levi decomposition, we know there is a semisimple Lie subalgebra $\mathfrak u$ of $\mathfrak g$ (called the Levi subalgebra of $\mathfrak g$) such that $\mathfrak g\simeq \mathfrak u\ltimes \mathfrak s$, where $\mathfrak s$ is the radical of $\mathfrak g$.
In the next theorem, we will assume that $\mathfrak u$ is compact and the nilradical $\mathfrak n$ of $\mathfrak s$ has codimension one.
It is not hard to see that one can always get a complement $\mathfrak{a}$ of $\mathfrak n$ in $\mathfrak s$ such that $[\mathfrak{u}, \mathfrak{a}]=0$ (see \cite{CRN} after Theorem 5.2).
Thus, for a non-zero element $Z \in \mathfrak a$, we have that $\mathfrak g\simeq (\RR Z\oplus \mathfrak u)\ltimes \mathfrak n$, with $[Z,\mathfrak u]=0$.

\begin{theorem}\label{thm:nilpotent}
Let $\ggo= (\RR Z \oplus \ug) \ltimes \ngo$ be a real Lie algebra where  $\mathfrak u$ is a compact semisimple Lie algebra, $\ngo$ is a nilpotent Lie algebra and
$Z$ is a non-zero element commuting with $\mathfrak u$.
Let $\ngo=\ngo_1\oplus \dots \oplus \ngo_k$ be the decomposition of $\ngo$ as an $\mathfrak u$-module in irreducible components.
If any of the $\ngo_j$ admits a decomposition as in Theorem~\ref{theTheo} and $\ad Z_{|_\ngo}$ acts as a positive multiple of the identity in every $\ngo_i$,
then $\ggo$ admits an inner product with negative Ricci curvature.
\end{theorem}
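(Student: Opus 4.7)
The plan is to deform $\ggo$ into a Lie algebra of the form covered by Theorem~\ref{Z:positive}, apply that theorem to the limit, and then transfer Ricci negativity back to $\ggo$ by continuity. This mirrors the strategy used in \cite[Thm.~5.4]{CRN} for the abelian-nilradical setting.

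Concretely, I would introduce the one-parameter family of linear isomorphisms $g_t\in\Gl(\ggo)$ acting as the identity on $\RR Z\oplus\ug$ and as $t\,\Id$ on $\ngo$ for $t>0$, and consider the deformed brackets $[\cdot,\cdot]_t:=g_t^{-1}[g_t\cdot,g_t\cdot]$. A routine check shows that $[\cdot,\cdot]_t$ agrees with the bracket of $\ggo$ on $\RR Z\oplus\ug$, preserves both the $\ug$-action and the $\ad Z$-action on $\ngo$ (being linear, the factors of $t$ cancel), and rescales the nilpotent bracket to $[v,w]_t=t[v,w]$ for $v,w\in\ngo$. Hence $\ggo_t:=(\ggo,[\cdot,\cdot]_t)$ is isomorphic to $\ggo$ via $g_t$ for every $t\neq 0$, while the family converges as $t\to 0$ to the Lie algebra $\ggo_\infty=(\RR Z\oplus\ug)\ltimes V$, where $V$ stands for $\ngo$ equipped with the trivial bracket, with the original $\ug$-module decomposition $V=\ngo_1\oplus\dots\oplus\ngo_k$, and with $\ad Z|_{\ngo_i}=c_i\Id_{\ngo_i}$ where each $c_i>0$.

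Next I would apply Theorem~\ref{Z:positive} to $\ggo_\infty$: the hypotheses match verbatim, since $V$ splits into $\ug$-submodules $W_i=\ngo_i$, one of them (say $\ngo_j$) admits by assumption a decomposition satisfying properties (1)--(6) of Theorem~\ref{theTheo}, and $\ad Z$ acts by the positive scalar $c_i$ on each $\ngo_i$. This yields an inner product $\langle\cdot,\cdot\rangle_\infty$ on $\ggo_\infty$ with negative definite Ricci operator.

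Finally, since the Ricci operator depends continuously (indeed polynomially) on the structure constants of the Lie bracket when the underlying vector space and inner product are fixed, and negative definiteness of a self-adjoint operator is an open condition, there is some $t_0>0$ such that $(\ggo_t,\langle\cdot,\cdot\rangle_\infty)$ has negative Ricci curvature for all $0<t<t_0$. Pulling $\langle\cdot,\cdot\rangle_\infty$ back along the isomorphism $\ggo\cong\ggo_t$ produces the desired metric on $\ggo$. There is no substantive obstacle in this argument; the only point requiring care is to verify that the degeneration preserves the $\ug$-module structure and the $\ad Z$-action on $\ngo$ (so that the hypotheses of Theorem~\ref{Z:positive} transfer to $\ggo_\infty$ without alteration), which is precisely why we rescale only in the nilpotent direction and leave $\RR Z\oplus\ug$ untouched.
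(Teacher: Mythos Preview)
Your argument is correct and follows essentially the same route as the paper: degenerate $\ggo$ by rescaling only the $\ngo$-direction so that the nilpotent bracket tends to zero while the $(\RR Z\oplus\ug)$-action on $\ngo$ is preserved, apply Theorem~\ref{Z:positive} to the abelian-nilradical limit, and conclude by continuity. The only cosmetic difference is the parametrization of the degeneration (you send $t\to 0$ with $[v,w]_t=t[v,w]$, whereas the paper sends $t\to\infty$ with $[v,w]_t=\tfrac{1}{t}[v,w]$), which is immaterial.
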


\begin{proof}
We know that $\ad Z_{|_{\ngo_i}}=c_i \Id_{\ngo_i}$ for some $c_i>0$.
We set $\mathfrak a=\RR Z$.

It is shown in \cite[Theorem 5.4 or (31)]{CRN}  that $\ggo=(\RR Z \oplus \ug \oplus \ngo, [,])$ degenerates into a Lie algebra $\lgo = (\RR Z \oplus \ug \oplus \ngo, [,]_0)$ where the action of $\RR Z \oplus \ug$ on $\ngo$ is the same as in $\ggo$ but $\ngo$ is an abelian ideal of $\lgo$. In fact,  for each $t>0$ let us consider $\psi_t \in \glg(\ggo)$ such that
$$
{\psi_t}_{|_{\ag \oplus \ug}}=\Id, \qquad {\psi_t}_{|_\ngo}=t \,\Id.
$$
Using that $\ngo$ is an ideal of $\ggo$, it is easy to check that $[,]_t := \psi_t \cdot [,]$ is given by
\begin{equation}\label{degabel}
\begin{aligned}{}
[X_1,X_2]_t&=[X_1,X_2]
	&&\text{ for } X_1,X_2 \in \ag\oplus \ug,
\\
[X_1,X_2]_t&=\tfrac{1}{t}[X_1,X_2]
	&&\text{ for } X_1,X_2 \in \ngo,
\\
[X_1,X_2]_t&=[X_1,X_2]
	&&\text{ for } X_1 \in \ag\oplus \ug,\; X_2 \in \ngo.
\end{aligned}
\end{equation}
Hence, $\displaystyle{\lim_{t \to \infty}} [,]_t =: [,]_0$ is well defined and is given by
\begin{equation}
\begin{aligned}{}
[X_1,X_2]_0 &=[X_1,X_2]  &&\text{for } X_1 \in  \mathfrak a\oplus \ug, X_2 \in \ggo,
\\
[X_1,X_2]_0&=0 &&\text{for } X_i  \in \ngo.
\end{aligned}
\end{equation}

 Note that since $[\mathfrak n,\mathfrak n]_0=0$, the Lie algebra $(\RR Z \oplus \ug \oplus \ngo, [,]_0)$ is an $\mathfrak l(\mathfrak u,\pi)$ as in \eqref{l}.
Now, by Theorem~\ref{Z:positive}, it admits an inner product with negative Ricci curvature and therefore so does $\ggo$.
\end{proof}

We note that when $\ad Z_{|_{\ngo}}$ is a positive definite operator with respect to some inner product on $\ngo$ or equivalently it is diagonalizable with positive eigenvalues, if it preserves an irreducible
submodule, then it necessarily acts as a multiple of the identity.
In fact, in this case
there always exist a decomposition of $\ngo$ in irreducible submodules such that
$\ad Z$ acts as a positive multiple of the identity in every term. Hence, with the same idea of the above proof we obtain the following theorem.

\begin{theorem}
Let $\ggo= (\RR Z \oplus \ug) \ltimes \ngo$ be a real Lie algebra where $\mathfrak u$ is a compact semisimple Lie algebra, $\ngo$ is a nilpotent Lie algebra and
$Z$ is a non-zero element commuting with $\mathfrak u$.
Assume that $\ad Z_{|_\ngo}$ is diagonalizable with positive eigenvalues and $\ngo=\ngo^{(1)}\oplus \dots \oplus \ngo^{(k)}$ is the corresponding decomposition in eigenspaces.
If any of the irreducible components of $\ngo^{(i)}$ as an $\mathfrak u$-module admits a decomposition as in Theorem~\ref{theTheo}, then $\ggo$ admits an inner product with negative Ricci curvature.
\end{theorem}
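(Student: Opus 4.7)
The plan is to reduce the statement to the previously established Theorem~\ref{thm:nilpotent}, by refining the given eigenspace decomposition $\ngo = \ngo^{(1)}\oplus\dots\oplus\ngo^{(k)}$ into a decomposition by irreducible $\mathfrak u$-submodules on each of which $\ad Z$ still acts as a positive scalar.

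First I would observe that $\ad Z_{|_\ngo}$ is a $\mathfrak u$-equivariant endomorphism, because $[Z,\mathfrak u]=0$. Hence every eigenspace $\ngo^{(i)}$ of $\ad Z_{|_\ngo}$ is $\mathfrak u$-invariant. Since $\mathfrak u$ is compact semisimple, every finite-dimensional real $\mathfrak u$-representation is completely reducible (via the unitary trick, applied to the compact connected Lie group integrating $\mathfrak u$), so each $\ngo^{(i)}$ can be written as a direct sum $\bigoplus_{j} \ngo^{(i)}_j$ of irreducible $\mathfrak u$-submodules. Assembling these yields
$$
\ngo = \bigoplus_{i,j} \ngo^{(i)}_j,
$$
and on each summand $\ngo^{(i)}_j$ the operator $\ad Z$ acts as the positive scalar $c_i$ associated to the eigenspace $\ngo^{(i)}$.

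By hypothesis, at least one irreducible $\mathfrak u$-component of some $\ngo^{(i)}$ admits a decomposition satisfying properties (1)--(6) of Theorem~\ref{theTheo}; this component is precisely one of the summands $\ngo^{(i_0)}_{j_0}$ above. Thus the refined decomposition meets every hypothesis of Theorem~\ref{thm:nilpotent}, namely an ordered decomposition of $\ngo$ into irreducible $\mathfrak u$-submodules, the positivity of the scalars with which $\ad Z$ acts on each piece, and the existence of one distinguished piece admitting a Theorem~\ref{theTheo}-type decomposition. An application of Theorem~\ref{thm:nilpotent} then produces the desired inner product on $\ggo$ with negative Ricci curvature.

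The main obstacle, such as it is, lies entirely in the earlier results: Theorem~\ref{thm:nilpotent} already performs the degeneration of $\ggo$ to a Lie algebra whose nilradical has been abelianized, and Theorem~\ref{Z:positive} already accommodates the different scalars $c_i$. The only point requiring attention here is the use of complete reducibility for real $\mathfrak u$-modules together with the observation that refining inside a single $\ad Z$-eigenspace automatically preserves the property that $\ad Z$ acts as a positive scalar on each irreducible summand; both are essentially immediate given the compactness of $\mathfrak u$.
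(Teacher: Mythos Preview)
Your proposal is correct and mirrors the paper's own proof essentially verbatim: use $[Z,\mathfrak u]=0$ to see that each eigenspace $\ngo^{(i)}$ is $\mathfrak u$-invariant, decompose it into irreducible $\mathfrak u$-submodules by complete reducibility, observe that $\ad Z$ acts as the positive scalar $c_i$ on each piece, and then invoke Theorem~\ref{thm:nilpotent}. The paper phrases the last step as ``the rest of the proof follows as in Theorem~\ref{thm:nilpotent}'', but this amounts to the same thing.
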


\begin{proof}
Since $\ngo=\ngo^{(1)}\oplus \dots \oplus \ngo^{(k)}$ is the decomposition in eigenspaces of $\ad Z_{|_\ngo}$ and $[Z,\mathfrak u]=0$,  $\mathfrak u$ preserves each $\ngo^{(i)}$ and one therefore gets
$$
\ngo=
\ngo_{1}^{(1)}\oplus  \dots \ngo_{j_1}^{(1)}
\oplus \dots\oplus
\ngo_{1}^{(k)}\oplus\dots \oplus \ngo_{j_k}^{(k)},
$$
a decomposition of $\ngo$ in irreducible submodules and $\ad Z_{|{\ngo_{j}^{(i)}}}=c_i \Id_{\ngo_{j}^{(i)}}$ for some $c_i>0$.

The rest of the proof follows as in Theorem~\ref{thm:nilpotent}.
\end{proof}

\end{document}